\definecolor{maroon}{rgb}{.69,.188,.376}
\definecolor{darkgreen}{rgb}{0,.5,0}
\definecolor{darkblue}{rgb}{0,0,.5}
\definecolor{magenta}{rgb}{1,0,1}
\newcommand{\Z}{\ensuremath{\mathbb{Z}}}
\newcommand{\N}{\ensuremath{\mathbb{N}}}
\newcommand{\R}{\ensuremath{\mathbb{R}}}
\newcommand{\E}{\ensuremath{\mathbb{E}}}
\renewcommand{\P}{\ensuremath{\mathbb{P}}}
\newtheorem{theorem}{Theorem}[section]
\newtheorem{lemma}[theorem]{Lemma}
\newtheorem{proposition}[theorem]{Proposition}
\newtheorem{remark}[]{Remark}
\numberwithin{equation}{section}
\definecolor{Red}{rgb}{1,0,0}
\definecolor{Blue}{rgb}{0,0,1}
\definecolor{Olive}{rgb}{0.41,0.55,0.13}
\definecolor{Yarok}{rgb}{0,0.5,0}
\definecolor{Green}{rgb}{0,1,0}
\definecolor{MGreen}{rgb}{0,0.8,0}
\definecolor{DGreen}{rgb}{0,0.55,0}
\definecolor{Yellow}{rgb}{1,1,0}
\definecolor{Cyan}{rgb}{0,1,1}
\definecolor{Magenta}{rgb}{1,0,1}
\definecolor{Orange}{rgb}{1,.5,0}
\definecolor{Violet}{rgb}{.5,0,.5}
\definecolor{Purple}{rgb}{.75,0,.25}
\definecolor{Brown}{rgb}{.75,.5,.25}
\definecolor{Grey}{rgb}{.7,.7,.7}
\definecolor{Black}{rgb}{0,0,0}
\newcommand{\ignore}[1]{{}}
\date{\today}
\begin{document}
\baselineskip=14pt
\setkeys{Gin}{width=\textwidth}

\title{Elliptic Harnack Inequality for  $\Z^d$}

\author{
Siva Athreya%
  \thanks{ Indian Statistical Institute, 8th Mile Mysore Road, Bangalore 560059, India.
    Email: \url{athreya@isibang.ac.in}}
  \and
Nitya Gadhiwala
  \thanks{Department of Mathematics, University of British Columbia, 1984 Mathematics Road, Vancouver, BC Canada, V6T 1Z2     Email: \url{nitya.g20@gmail.com}}
  \and
 Ritvik R. Radhakrishnan%
  \thanks{Department of Mathematics, ETH Zurich, HG G 33.4, Raemistrasse 101 8092 Zurich Switzerland
    Email: \url{ritvik.isibang@gmail.com}}
}

\maketitle

\begin{abstract}
  We prove the scale invariant Elliptic Harnack Inequality (EHI) for
  non-negative harmonic functions on $\Z^d$. The purpose of this
  note is to provide a simplified self-contained
  probabilistic proof of EHI in $\Z^d$  that is accessible at the undergraduate
  level.  We use the Local Central Limit Theorem for simple symmetric
  random walks on $\Z^d$ to establish Gaussian bounds for the $n$-step
  probability function. The uniform Green inequality and the classical
  Balayage formula then imply the EHI.
\end{abstract}

\noindent {\em AMS 2010 Subject Classification :} Primary: 05C81 Secondary: 31C05, 31C20 \\
\noindent {\em Keywords :} Random walk, Harmonic Function, Harnack Inequality, Gaussian Bounds, Balayage.

\section{Introduction}

The scale invariant Elliptic Harnack Inequality (EHI) and its
applications to the theory of elliptic and parabolic partial
differential equations are well known. The work on regularity of
solutions dates back to 1950's in the works of de Giorgi \cite{Gi57},
Nash \cite{N58} and Moser \cite{Mo61}. Moser first proved EHI for
solutions to partial differential equations in divergence form in
$\R^d$. Bombieri-Giusti \cite{BG72} showed that EHI holds for
minimal surfaces and EHI was proven for metric spaces by Sturm 
\cite{S98}. We refer the interested reader to the survey article by
Kassmann \cite{K07}, and the references therein for a comprehensive
review of the  Harnack inequality (for elliptic and parabolic
operators) in $\R^d$.

The EHI was established for weighted graphs by Delmotte
\cite{D99}. Barlow \cite{MB17} has a comprehensive treatment of random
walks on locally finite graphs and in \cite[Theorem 7.19]{MB17}
presents a proof of EHI for weighted graphs that have controlled
weights and are roughly isometric to $\Z^d$.  In this short note our
aim is to provide a simplified self-contained probabilistic proof of
EHI for harmonic functions on $\Z^d$ using properties of the simple symmetric random walk. Lawler in \cite[Theorem 1.7.2]{GL13} also provides a proof of EHI for $\Z^d$. In this short note our aim is to provide a simplified self-contained probabilistic proof of
EHI for harmonic functions on $\Z^d$ following the framework laid out in \cite{MB17} using properties of the simple symmetric random walk.

Let $d \geq 1$ and $\Z^d$ be the integer lattice. We will think of $\Z^d$ as a graph with vertex set $$\Z^d=\{(x_i)_{i=1}^d: x_i \in \Z\}$$ endowed with the (graph) distance  $d(x,y) = \sum_{i=1}^d |x_i - y_i|$ between any two points $x,y \in \Z^d$. We may view edge set $E = \{ \{x,y\}: x,y \in V, \, d(x,y) = 1\}.$  For any $x,y \in \Z^d$ we shall say $x \sim y$ if $\{x,y\} \in E$.   For $r >0$, we define the ball of radius $r$ around a point $x_0 \in \Z^d$ as
$$ B(x_0,r) = \{ x \in  \Z^d : d(x,x_0) \leq r\}.$$ Let $A
\subset \Z^d$.  We define the boundary of $A \subset \Z^d$ by $$\partial A = \{ y \in A^c : \exists \, x \in A \mbox{ with } x \sim y \}$$
and set $\bar{A}  = A \cup \partial A$.  For $f: \bar{A} \rightarrow \R$ we define the Laplacian $\Delta$  of $f$ as
    $$\Delta f(x) = \frac{1}{2d} \sum_{x \sim y} (f(x)-f(y)), \qquad \mbox{for all }  x \in A.$$  A function $f : \bar{A} \rightarrow \R$ is said to be harmonic in
$A$ if $\Delta f (x) = 0$ for all $x \in A$.  The scale invariant EHI is then stated as follows.

\begin{theorem}[{\bf EHI}] \label{t:ehi}Let $ d\geq 1$.  There exists $C$ such that for any $x_0 \in \Z^d$, $ R\geq 1$, $h: \overline{B(x_0, R)} \rightarrow \R_+$ with $h$ harmonic in $B(x_0, R)$ we have
  \begin{equation}\label{eq:ehi}
    \max_{y \in B\left(x_0,\frac{R}{2}\right)} h(y) \leq C \min_{z\in
      B\left(x_0,\frac{R}{2}\right)}h(z)
    \end{equation}
\end{theorem}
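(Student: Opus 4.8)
The plan is to represent the non-negative harmonic function $h$ through the Green function of the ball and then reduce \eqref{eq:ehi} to a uniform comparison of Green functions from interior starting points, which is exactly where the Gaussian bounds enter. Write $B = B(x_0,R)$, let $(X_n)$ be the simple symmetric random walk with transition probabilities $P(x,y) = \frac{1}{2d}\mathbf{1}_{\{x\sim y\}}$, so that harmonicity of $h$ in $B$ reads $Ph = h$ on $B$, let $\tau_B = \inf\{n\ge 0 : X_n\notin B\}$, and let
$$ g_B(x,z) = \E_x\Big[\sum_{n=0}^{\tau_B-1}\mathbf{1}_{\{X_n=z\}}\Big] = \sum_{n\ge 0}\P_x(X_n = z,\, n<\tau_B) $$
be the Green function of the walk killed on exiting $B$. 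First I would record the discrete Green identity: writing $P_B$ for $P$ restricted to transitions inside $B$, one has $(I-P_B)g_B(\cdot,z) = \delta_z$ on $B$, i.e. $g_B = (I-P_B)^{-1}$, while harmonicity of $h$ gives $(I-P_B)h = f$ on $B$, where
$$ f(z) = \sum_{y\in\partial B}P(z,y)\,h(y) = \frac{1}{2d}\sum_{\substack{y\in\partial B\\ y\sim z}}h(y)\ \ge\ 0 . $$
Applying $g_B$ yields the Balayage representation $h(x) = \sum_{z\in B} g_B(x,z)\,f(z)$ for $x\in B$, with $f\ge 0$ supported on the inner boundary $\{z\in B: z\sim y\text{ for some }y\in\partial B\}$, i.e.\ on points at distance of order $R$ from $x_0$.

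Given this representation, \Cref{t:ehi} reduces to a single estimate. Indeed, if I can establish the \emph{uniform Green inequality}
$$ g_B(x,z)\ \le\ C\, g_B(x',z)\qquad\text{for all } x,x'\in B\big(x_0,\tfrac R2\big)\text{ and all } z\in\supp f, $$
then multiplying by $f(z)\ge 0$ and summing over $z$ gives $h(x)\le C\,h(x')$ for every pair $x,x'\in B(x_0,R/2)$; taking the maximum on the left and the minimum on the right is precisely \eqref{eq:ehi}. So the whole theorem comes down to comparing the Green function at two interior starting points against near-boundary targets, with a constant independent of $R$ and $x_0$ (translation invariance removes $x_0$, and the scale dependence must be tracked carefully through the constants).

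The main obstacle is this uniform Green inequality, and it is where the Local Central Limit Theorem is used. Writing $g_B(x,z)=\sum_n \P_x(X_n=z,\,n<\tau_B)$, I would bound the killed $n$-step probabilities in terms of the free ones through a last-exit or strong Markov decomposition at $\tau_B$, and then insert the Gaussian estimates $p_n(x,z)\asymp n^{-d/2}\exp(-c\,|x-z|^2/n)$ supplied by the LCLT. Since every relevant target $z\in\supp f$ lies at distance of order $R$ from both $x$ and $x'$, the dominant contribution to the Green sums comes from times $n\asymp R^2$, at which the Gaussian factors for $x$ and for $x'$ are each bounded above and below by constants; this is what forces the ratio $g_B(x,z)/g_B(x',z)$ to stay bounded uniformly. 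The delicate point is the lower bound on $g_B(x',z)$ when $z$ is close to $\partial B$, since the killing makes the killed heat kernel small there. I would obtain the matching lower estimate either by reducing to the hitting identity $g_B(x,z)=\P_x(T_z<\tau_B)\,g_B(z,z)$ and comparing the hitting probabilities of $z$ from $x$ and from $x'$, or by first proving the comparison on an intermediate annulus where the two-sided Gaussian bounds apply cleanly and then handling the final steps to $\partial B$ separately. Carrying the scale-invariant constants through this step uniformly in $R$, $x_0$ and $z$ is the crux; the remainder of the argument is the bookkeeping of the representation above.
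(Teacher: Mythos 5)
Your overall architecture (a Balayage representation of $h$ plus a uniform comparison of Green functions) matches the paper's, but your specific reduction has a genuine gap: you sweep $h$ onto the inner boundary of $B$ itself, so your charge $f$ is supported on $\partial_i B$, i.e.\ at points adjacent to the killing boundary, and you then need $g_B(x,z)\le C\,g_B(x',z)$ uniformly for $x,x'\in B(x_0,\frac R2)$ and such $z$. This comparison is not delivered by the LCLT/Gaussian machinery: the two-sided bounds for the killed kernel (Lemma \ref{l:kdb}) and the uniform Green inequality (Proposition \ref{p:ugi}) hold only when \emph{both} arguments lie in $B(x_0,\frac R2)$; when $z$ sits next to $\partial B$ the killing forces $g_B(\cdot,z)$ to carry an extra boundary-decay factor, and it is no longer comparable to $r^{-(d-2)}$ (or $\log(R/r)$ when $d=2$). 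Worse, the estimate you need is exactly \eqref{eq:ehi} applied to the non-negative function $g_B(\cdot,z)$, which is harmonic on $B\setminus\{z\}\supset B(x_0,\frac R2)$ --- so, as written, the reduction is circular. Neither of your suggested patches closes this: the identity $g_B(x,z)=\P^x(T_z<\tau_B)\,g_B(z,z)$ merely transfers the problem to comparing hitting probabilities of a near-boundary point from two interior starting points (again a Harnack-type statement), and ``handling the final steps to $\partial B$ separately'' is precisely where a boundary-Harnack-type argument, for which you have no tools, would be required.

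The paper sidesteps the boundary entirely by choosing the swept set more cleverly: it applies the Balayage formula (Proposition \ref{p:blyge}) with $A=B(x_0,\frac R2)$ inside $B=B(x_0,R)$, so the charge $f$ is supported on $\partial_i A$, well inside $B$. For $x_1,x_2\in B(x_0,\frac R4)$ and $y\in\partial_i A$ one has $\frac R4\le d(x_i,y)\le \frac{3R}4$, so both arguments of $g_B$ lie in $B(x_0,\frac R2)$ at comparable distances, and Proposition \ref{p:ugi} immediately gives $g_B(x_1,y)\le C\, g_B(x_2,y)$; summing against $f\ge 0$ yields $h(x_1)\le C\,h(x_2)$ on the quarter-ball, and a standard chain of balls of radius $\frac R8$ (together with the elementary nearest-neighbour bound $h(x)\ge \frac{1}{2d}h(y)$ for $x\sim y$, which disposes of the case $R\le 32$) upgrades the comparison to the half-ball, proving \eqref{eq:ehi}. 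If you rework your argument with this intermediate sweeping set, your outline becomes essentially the paper's proof; as it stands, the near-boundary Green comparison is an unproven step at least as hard as the theorem itself.
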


 We work with simple symmetric random walks in $\Z^d$, whose
 transition probability matrix $P$ is connected with the Laplacian
 $\Delta$. The proof has three key steps -- the Gaussian bounds for
 the $n$-step probability function of the simple symmetric random walk
 (Proposition \ref{p:gb}), the uniform Green inequality for the Green
 function asssociated with the random walk (Proposition \ref{p:ugi}),
 and the Balayage formula for harmonic functions (Proposition
 \ref{p:blyge}).

 The proof of Theorem \ref{t:ehi} that we present here proceeds along
 the framework given in the proof of \cite[Theorem 7.19]{MB17} (which
 applies for those graphs which have controlled weights and are
 roughly isometric to $\Z^d$). Our proof uses methods specific
     to the random walk on $\Z^d$ and has appropriate simplifications
     whenever possible. To establish Proposition \ref{p:gb} for
     weighted graphs one first shows the Poincar\'e inequality along with
     a growth condition for volume of the balls in the graph. Then the
     equivalence results shown in \cite{D99} will imply the result
     (See also \cite[Theorem 6.28]{MB17}). For $\Z^d$, we use the Local Central
     Limit Theorem for simple random walks to obtain near diagonal
     bounds on the $n$-step transition probability function (See
     Proposition \ref{p:lclt}). Then the Gaussian lower bound is
     obtained via a classical chaining of balls argument. The Gaussian
     upper bound follows from exponential bounds for the distribution
     of exit times of the simple random walk from a ball. For this, we
     use the specific structure of the walk in $\Z^d$, by reducing the
     question for the $d$ dimensional random walk to a one dimensional
     walk which follow from Chernoff bounds (See Lemma
     \ref{l:hbound}). We also note that Gaussian bounds for the simple
     symmetric random walk in $\Z^d$ are part of folklore and are well
     known but it is hard to point to explicit references in the
     literature where they can be found.
    
As mentioned before,  \cite[Theorem 1.7.2]{GL13} also contains a proof for the EHI in $\Z^d$.  There the uniform Green inequality is obtained using explicit bounds on the hitting distribution of the walk for three and higher dimensions \cite[Proposition 1.5.10]{GL13} and on the Green function for two dimensions in \cite[Proposition 1.6.7]{GL13}. The Local Central Limit Theorem is used to derive precise asymptotics on the Green function and the  propositions are proved via an application of the optional stopping theorem for martingales. For proving the uniform Green inequality we use Proposition
     \ref{p:gb} to establish Gaussian bounds for $n$-step transition
     probabilities of walk killed on exiting the ball. Plugging these
     estimates and exit time bounds obtained above into the definition
     of the Green function one obtains the result. The Balayage
     formula is shown using the probabilistic representation for the
     solution to the Dirichlet problem and standard potential theory
     techniques. The proofs of uniform Green inequality and the Balayage formula  translate to graphs that have controlled weights and are roughly isometric to $\Z^d$ (See \cite[Theorem 4.26, Theorem 7.15]{MB17}).

The Balayage formula is analogous to the classical Poisson integral
formula for Harmonic functions in $\R^d$ with the Green function
playing the role of the Poisson Kernel. The uniform Green inequality
then enables the scale invariant Harnack inequality. The uniform Green
inequality are implied by the Gaussian bounds.

We remark that the constant $C$ in Theorem \ref{t:ehi} does not depend on $R$ but only on dimension $d$ and the $\frac{1}{2}$-scaling used to obtain the smaller ball. There are many applications of the EHI. For instance, using EHI one can show that the probability of exiting the boundary of $B(x_0,R)$ via a particular set is comparable for different starting points inside $B(x_0,\frac{R}{2}$).  The EHI implies  the oscillation inequality for  harmonic functions, in the sense that if for any set $A \subset \Z^d,$ $$ \mbox{Osc}(h,A) = \sup_{x \in A}h(x)-\inf_{x \in A}h(x)$$ then there is a $0 <\delta <1$ such that 
$$ \mbox{Osc}\left(h,B\left(x_0,\frac{R}{2}\right)\right) \leq (1-\delta) \mbox{Osc}\left(h,B(x_0,R)\right).$$ 
The  oscillation inequality will then imply H\"older continuity for $h$ and more importantly the Strong Liouville property, which asserts that if $h \geq 0$ and is harmonic on $\Z^d$ then $h$ is constant (See \cite[Theorem 1.46]{MB17}). 

\medskip
{\bf Acknowledgements:} We would like to thank Martin Barlow and D. Yogeshwaran for useful discussions and feedback on a draft of the note.  We also thank Greg Lawler for pointing us to \cite[Theorem 1.7.2]{GL13}.  This work was done as part of a summer reading seminar when Nitya Gadhiwala  and Ritvik Radhakrishnan were undergraduate students in the B. Math. (Hons.) program at the Indian Statistical Institute, Bangalore centre.  Siva Athreya was partially supported by a CPDA grant at the Indian Statistical Institute, Bangalore centre.

\medskip

\noindent {\bf Layout for rest of the paper:} In Section \ref{sec:ehiproof} we will prove
Theorem \ref{t:ehi} assuming the three Propositions. Then we prove the
Gaussian bounds in Section \ref{sec:ghb}, the uniform Green inequality
in Section \ref{sec:ugi}, and the Balayage formula in Section
\ref{sec:blyge}, to complete the proof.

\medskip
 
\noindent \textbf{Convention on constants}. Throughout the article $c,
c_1, c_2, \ldots$ denote positive constants whose value may change
from line to line. They are all positive and their precise values are
not important. In statements of results will use capital letters $A,B,
\ldots$ to denote special constants. Also, throughout this note the
constants depend on the dimension $d$, but will not be mentioned
explicitly. Any other dependence will be stated.

\section{Proof of Theorem \ref{t:ehi}} \label{sec:ehiproof}
When $d=1$, it is easy to see that any harmonic function $h: \Z \rightarrow \R$ is of the form $h(k) = \alpha k + \beta$ for some real numbers $\alpha, \beta.$ Then \eqref{eq:ehi} follows immediately. We will now present a proof when $d \geq 2.$

Let $X$ be the simple random walk on $\Z^d$.  That is,  $X$ is a  discrete time Markov chain $(\{X_n\}_{n \geq 0}, \{\P^{x}\}_{x\in \Z^d})$ with  transition matrix $P=[p_{xy}]$ given by 
\[ p_{xy} = \left \{ \begin{array}{ll} \frac{1}{2d} & \mbox{ if } x \sim y, \\
  0 & \mbox{ otherwise,}
\end{array} \right.\]
for any $x,y \in \Z^d$ on a measure space $(\Omega, {\cal F})$. Under the probability $\P^{x}$ we have $\P^x(X_0=x)=1$.  Note that $\Delta= (P-I)$. For any $n \geq 1$, we will denote the $n$-step probability function  of $X$ by
\begin{equation}\label{eq:nst}
p_n(x,y) = \P^x(X_n = y),
\end{equation}
with $p_1(x,y) = p_{xy}$. The first step in the proof is to establish the following Gaussian upper and lower bounds for the $n$-step  probabilities of the random walk.

\begin{proposition}[{\bf Gaussian Bounds}] \label{p:gb} Let $X_n$ be the simple random walk on $\Z^d$ with $n$-step  probability function $p_n(\cdot, \cdot)$ given by \eqref{eq:nst}. Then
  \begin{enumerate}
\item[(a)] there exists $L_1 >0$ and $L_2 >0$ such that for all $x,y \in \Z^d, n \geq \max\{1,d(x,y)\}$
        \begin{equation}\label{eq:lhkb}
      p_n(x,y) +p_{n+1}(x,y) \geq \frac{L_1}{n^{\frac{d}{2}}}\exp\left(-L_2\frac{d^2(x,y)}{n}\right),     \end{equation}
  \item[(b)]there exists $U_1 >0$ and $U_2 >0$ such that for all $x,y \in \Z^d, n \geq 0$
        \begin{equation}\label{eq:uhkb}
          p_n(x,y)  \leq \frac{U_1}{(\max\{n,1\})^{\frac{d}{2}}}\exp\left(-U_2\frac{d^2(x,y)}{\max\{n,1\}}\right).     \end{equation}
        \end{enumerate}
\end{proposition}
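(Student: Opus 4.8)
The plan is to handle the two inequalities by genuinely different arguments, both built on the near-diagonal estimate of the Local Central Limit Theorem (Proposition \ref{p:lclt}), which I will use in the form: there exist $\eta>0$ and $c_0>0$ so that $p_n(x,y)+p_{n+1}(x,y)\ge c_0 n^{-d/2}$ and $p_n(x,y)\le c_0^{-1}n^{-d/2}$ whenever $d(x,y)\le \eta\sqrt n$. Write $m:=d(x,y)$. For the lower bound (a) I would first dispose of the near-diagonal case $m\le \eta\sqrt n$: there $\exp(-L_2 m^2/n)$ is bounded below by the constant $\exp(-L_2\eta^2)$, so the near-diagonal estimate already gives \eqref{eq:lhkb}. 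All the work is in the genuinely off-diagonal regime $\eta\sqrt n < m\le n$ (recall the hypothesis $n\ge m$).

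There I would run the classical chaining of balls. Set $k=\lceil m^2/n\rceil$, so that $1\le k\le n$ by $n\ge m$, and $t=\lfloor n/k\rfloor$; place points $x=z_0,z_1,\dots,z_k=y$ along a geodesic with consecutive distances $d(z_{i-1},z_i)\approx m/k\approx \sqrt t$, and force the walk to pass, in about $t$ steps each, through a ball $B(z_i,\rho)$ of radius $\rho\approx\sqrt t$ about each $z_i$. By Chapman--Kolmogorov,
\[
 p_n(x,y)\ \ge\ \sum_{w_i\in B(z_i,\rho)}\ \prod_{i=1}^{k} p_{t_i}(w_{i-1},w_i),
\]
where the $t_i\in\{t,t+1\}$ are chosen to sum to $n$ and to repair parities. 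Each factor is then near-diagonal, hence $\ge c\,t^{-d/2}$, while the number of interior choices is $\gtrsim (t^{d/2})^{k-1}$. Multiplying, the powers of $t$ telescope to leave $p_n(x,y)\ge (\text{const})^{k}\,t^{-d/2}\gtrsim (\text{const})^{k}(k/n)^{d/2}$; substituting $k\approx m^2/n$ turns $(\text{const})^{k}$ into $\exp(-L_2 m^2/n)$, while $(k/n)^{d/2}=(m^2/n^2)^{d/2}$ stays above a constant multiple of $n^{-d/2}$ precisely because $m>\eta\sqrt n$ in this regime. The main obstacle is making the several ``$\approx$'' above rigorous so that the geodesic points and the surrounding balls really sit inside the near-diagonal window, together with the parity bookkeeping that routes the lost factor into the $+\,p_{n+1}$ term.

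For the upper bound (b) I would argue in three steps. First, an on-diagonal bound $p_n(x,y)\le c_1 n^{-d/2}$ valid for all $x,y$: by Cauchy--Schwarz and symmetry, $p_{2\ell}(x,y)=\sum_z p_\ell(x,z)p_\ell(z,y)\le p_{2\ell}(x,x)^{1/2}p_{2\ell}(y,y)^{1/2}$, so it suffices to control the return probability $p_{2\ell}(x,x)$, which is the near-diagonal estimate at $y=x$; odd times are absorbed by one extra step. Second, a Gaussian tail for the displacement: projecting onto a coordinate $j$ with $|x_j-y_j|\ge m/d$ makes the $j$-th coordinate a one-dimensional lazy walk with i.i.d.\ increments, so $\P^x\!\big(d(X_n,x)\ge r\big)\le 2d\,\P(|S_n|\ge r/d)\le c\exp(-c' r^2/n)$ for $r\le n$ by the Chernoff bound of Lemma \ref{l:hbound}. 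Third, I combine the two: writing $p_n(x,y)=\sum_z p_{\lfloor n/2\rfloor}(x,z)p_{\lceil n/2\rceil}(z,y)$ and noting that every intermediate $z$ is at distance $\ge m/2$ from $x$ or from $y$, I split the sum along this dichotomy, bound the ``near'' factor by $c_1 n^{-d/2}$ and the total mass of the ``far'' factor by $c\exp(-c'm^2/n)$, obtaining \eqref{eq:uhkb} at once. The case $m>n$ is trivial since then $p_n\equiv 0$, and $m\le\sqrt n$ follows from the on-diagonal step alone, so only the half-time splitting does real work, and it is routine once the first two steps are in place.
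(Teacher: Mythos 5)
Your overall strategy coincides with the paper's: part (a) is the near-diagonal LCLT bound plus a chaining of balls, and part (b) is the on-diagonal bound combined with a Chernoff exit/displacement estimate via a half-time splitting over the intermediate points closer to $x$ or to $y$ (the paper's sets $A_x$, $A_y$ together with Lemma \ref{l:hbound}). Part (b) as you describe it is essentially the paper's proof and is sound; the Cauchy--Schwarz reduction to return probabilities is unnecessary, since \eqref{eq:ulclt} already holds for all $x,y$, but it is harmless.

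There is, however, a genuine gap in part (a): your two regimes $m\le\eta\sqrt n$ and $\eta\sqrt n<m\le n$ do not suffice. First, the constant in $k$ matters: to have $d(w_{i-1},w_i)\le 3\rho+1\le\eta\sqrt{t_i}$ for $w_{i-1}\in B(z_{i-1},\rho)$, $w_i\in B(z_i,\rho)$, you must take $k\asymp m^2/(\eta^2 n)$ rather than $\lceil m^2/n\rceil$ (the paper takes $m^2$ times $2^5/(L^2 n)$ up to $2^6/(L^2n)$). More seriously, once $k$ is so chosen, the time per step is $t\asymp \eta^2 n^2/m^2$, and as soon as $n\le Cm$ for a suitable constant $C=C(\eta)$ this falls below the threshold $\eta^{-2}$ required by \eqref{eq:llclt}, while the ball radii drop below $1$; in that sub-regime the individual factors are not near-diagonal and the chaining argument as written fails. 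The paper isolates exactly this range as a separate case (its Case 1, $m\le n\le \tfrac{2^6}{L^2}m$) and disposes of it with the elementary path bound $p_n(x,y)+p_{n+1}(x,y)\ge(2d)^{-(n+1)}$, which suffices there because $n$, $m$ and $m^2/n$ are all comparable, so $\exp(-cn)\ge c'\,n^{-d/2}\exp\bigl(-c''\,m^2/n\bigr)$. Adding this third case closes the gap; the rest of your chaining matches the paper's Case 3.
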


The second step in the proof is to use the Gaussian bounds to
establish the Uniform Green Inequality. Now, for any subset $D \subset
\Z^d$ define the entrance time to $D$ and exit time from $D$
\begin{equation*}
  T_D = \inf\{ k \geq 0: X_k \in D\} \text{ and }\tau_D = \inf\{ k \geq 0: X_k \in D^c\}.
  \end{equation*}
We define the  killed $n$-step  probability function of $X$ by
\begin{equation} \label{knst}
p^D_n(x,y) = \P^x(X_n = y, n < \tau_D).
  \end{equation}
The Green function corresponding to $D$, $g_D: D \times D \rightarrow \R$, is given by
\begin{equation}\label{eq:greenf}
g_D(x,y)= \sum_{n=0}^\infty p_n^D(x,y). 
\end{equation}
Thus the Green function represents the expected time spent (or number of visits) by the walk at vertex $y$ before exiting the domain $D$ when it starts at $x$. We now state the Uniform Green Inequality.
\begin{proposition}[{\bf Uniform Green Inequality}] \label{p:ugi} There exists $G_1,$  $G_2>0$ such that if $R \geq 2, B=B(x_0,R), x,y \in B(x_0, \frac{R}{2})$, $ r = \max\{1, d(x,y)\}$, then
  \begin{enumerate}
    \item[(a)] for $d \geq 3$
  \begin{equation}\label{eq:ugid}
    \frac{G_1}{r^{d-2}} \leq g_B(x,y) \leq \frac{G_2}{r^{d-2}},
    \end{equation}

  \item[(b)] and for $d=2$
    \begin{equation}\label{eq:ugi2}
  G_1\log\left(\frac{R}{r}\right) \leq g_B(x,y) \leq G_2 \log\left(\frac{R}{r}\right).
    \end{equation}
\end{enumerate}
\end{proposition}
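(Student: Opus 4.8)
The plan is to reduce both inequalities to Gaussian estimates for the killed probabilities $p_n^B(x,y)$ and then sum the resulting series against \eqref{eq:greenf}. Write $s=d(x,y)$ and $r=\max\{1,s\}$, and note that every $y\in B(x_0,R/2)$ lies at distance at least $R/2$ from $\partial B$. The inputs are Proposition \ref{p:gb} and the exit-time tail bounds for $\tau_B$ obtained in the proof of Proposition \ref{p:gb} (via Lemma \ref{l:hbound}); the latter control how the killing alters the free bounds once $n$ exceeds the natural scale $R^2$.

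\emph{Upper bound.} The domination $p_n^B(x,y)\le p_n(x,y)$ and Proposition \ref{p:gb}(b) give $p_n^B(x,y)\le U_1(\max\{n,1\})^{-d/2}\exp(-U_2 s^2/\max\{n,1\})$. For $d\ge 3$ I would sum directly: comparing with $\int_1^\infty t^{-d/2}e^{-U_2 r^2/t}\,dt$ and substituting $u=r^2/t$ gives $r^{2-d}\int_0^{r^2}u^{d/2-2}e^{-U_2 u}\,du$, and since $d/2-2>-1$ this integral is bounded by a constant, so $g_B(x,y)\le G_2 r^{2-d}$ with the killing never needed. For $d=2$ the free series diverges, so I would split at $n\asymp R^2$: the same substitution shows $\sum_{n\le R^2}p_n(x,y)$ is of order $\log(R/r)$ (the block $r^2\le n\le R^2$ yields $\sum n^{-1}\asymp\log(R^2/r^2)$ and the block $n\le r^2$ only $O(1)$), while for $n>R^2$ I would use the killed bound $p_n^B(x,y)\le c\,R^{-d}e^{-c'n/R^2}$ coming from the spectral gap $\lambda_1(B)\asymp R^{-2}$; summing the latter over $n>R^2$ gives $O(1)$, of the claimed order $\log(R/r)$ in the range of $r$ relevant to \eqref{eq:ugi2}.

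\emph{Lower bound.} Here I would recover a Gaussian lower bound for $p_n^B$ on an intermediate time range. Writing $p_n^B(x,y)=p_n(x,y)-\P^x(\tau_B\le n,\,X_n=y)$ and applying the strong Markov property at $\tau_B$, the error equals $\sum_{k\le n}\sum_{w\in\partial B}\P^x(\tau_B=k,X_k=w)\,p_{n-k}(w,y)$; since $d(w,y)\ge R/2$ for every $w\in\partial B$, Proposition \ref{p:gb}(b) makes each $p_{n-k}(w,y)$ a Gaussian in the large distance $R/2$, so the error is a lower-order correction to $p_n(x,y)$ provided $n\le c_4 R^2$. Using Proposition \ref{p:gb}(a) through the pair $p_n+p_{n+1}$ (to dodge the parity obstruction) then yields $p_n^B(x,y)+p_{n+1}^B(x,y)\ge c_1 n^{-d/2}\exp(-c_2 r^2/n)$ for $c_3 r^2\le n\le c_4 R^2$. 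On that range $e^{-c_2 r^2/n}\ge e^{-c_2/c_3}$, so summing leaves $\sum_{c_3 r^2}^{c_4 R^2}n^{-d/2}$, which is $\asymp r^{2-d}$ for $d\ge 3$ and $\asymp\log(R/r)$ for $d=2$, giving the lower halves of \eqref{eq:ugid} and \eqref{eq:ugi2}.

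The main obstacle I anticipate is the lower bound, and specifically the quantitative control of the error $\P^x(\tau_B\le n,\,X_n=y)$: one must show that the mass which has already left $B$ cannot return to the deep-interior point $y$ strongly enough to cancel the leading term, uniformly for $n$ up to order $R^2$. This is precisely where the separation $d(\partial B,y)\ge R/2$ and the exit-time tails are indispensable. A secondary difficulty is the large-$n$ upper bound in $d=2$, which needs the volume-normalized decay $p_n^B(x,y)\le c\,R^{-d}e^{-c'n/R^2}$ rather than the free on-diagonal bound, i.e. genuine use of the spectral gap of the ball.
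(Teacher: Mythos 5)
Your overall strategy (sum Gaussian bounds for $p_n^B$ against \eqref{eq:greenf}) is the paper's strategy, and your upper bound for $d\ge 3$ and the block decomposition for $d=2$ match the paper's proof. But there is a genuine gap in your lower bound, precisely at the point you flag as the ``main obstacle.'' Your subtraction argument bounds the error term by $\sup_{w\in\partial B}\sup_{m\le n}p_m(w,y)\le c\,n^{-d/2}\exp(-U_2'(R/2)^2/n)$ and the main term from below by $c\,n^{-d/2}\exp(-L_2\,d(x,y)^2/n)$, where $U_2'$ and $L_2$ are the (different) constants in Proposition \ref{p:gb}(b) and (a). For $x,y\in B(x_0,R/2)$ the distance $d(x,y)$ can be as large as $R$, and since in general $L_2>U_2'/4$, the ``error'' $\exp(-U_2'R^2/(4n))$ is then \emph{not} dominated by the ``main term'' $\exp(-L_2 d(x,y)^2/n)$: the difference can be negative for every $n$ in your window $[c_3r^2,c_4R^2]$ (indeed for $r$ near $R$ that window may be empty, since the near-diagonal lower bound forces $c_3\ge L^{-2}>1$ while the killing forces $c_4<1$). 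So your claimed killed lower bound $p_n^B+p_{n+1}^B\ge c_1n^{-d/2}e^{-c_2r^2/n}$ only follows by this route when $d(x,y)\le\delta R$ for a sufficiently small $\delta$ determined by the ratio of the Gaussian constants. The paper (Lemma \ref{l:kdb}) proves exactly that restricted statement first, and then extends it to all $d(x,y)\le R$ by a second chaining-of-balls argument \emph{inside} $B$, stringing together $m\asymp\delta^{-1}$ intermediate balls of radius $r/m\le\delta R$ so that the restricted estimate applies to each link. That chaining step is the missing idea in your proposal.

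Two smaller points. First, for the $d=2$ tail $\sum_{n>R^2}p_n^B(x,y)$ you invoke $p_n^B(x,y)\le cR^{-d}e^{-c'n/R^2}$ via the spectral gap of the ball; this bound is correct and does make the tail $O(R^{2-d})=O(1)$, but nothing in the paper establishes a spectral gap, so you would need to derive it (e.g.\ split $n$ in half via Chapman--Kolmogorov to extract an on-diagonal factor $n^{-d/2}$ from Proposition \ref{p:gb}(b), and get geometric decay of $\P^x(\tau_B>kR^2)$ from the Markov property together with $\sup_x\P^x(\tau_B>R^2)\le\theta<1$). The paper instead bounds this tail by $\sum_{n>R^2}\P^x(\tau_B>n)$ using its crude exit-time estimate. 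Second, your lower-bound summation $\sum_{c_3r^2}^{c_4R^2}n^{-d/2}$ and the resulting $r^{2-d}$ versus $\log(R/r)$ dichotomy is exactly the paper's computation, so once the killed lower bound is actually secured for all $d(x,y)\le R$ the rest of your argument goes through.
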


The uniform Green inequality is a key tool in the proof of EHI due to
the fact that Green function plays a role similar to that of the
Poisson Kernel for harmonic functions in $\R^d$.  This established by
the Balayage formula for harmonic functions. To state the result, we
need one additional notation. For any $A \subset \Z^d$, we define the
interior boundary of $A$ by
 $$\partial_i A = \{ y \in A : \exists \, x \in A^c \mbox{ with } x \sim y \}.$$

\begin{figure}
\includegraphics[scale=0.1]{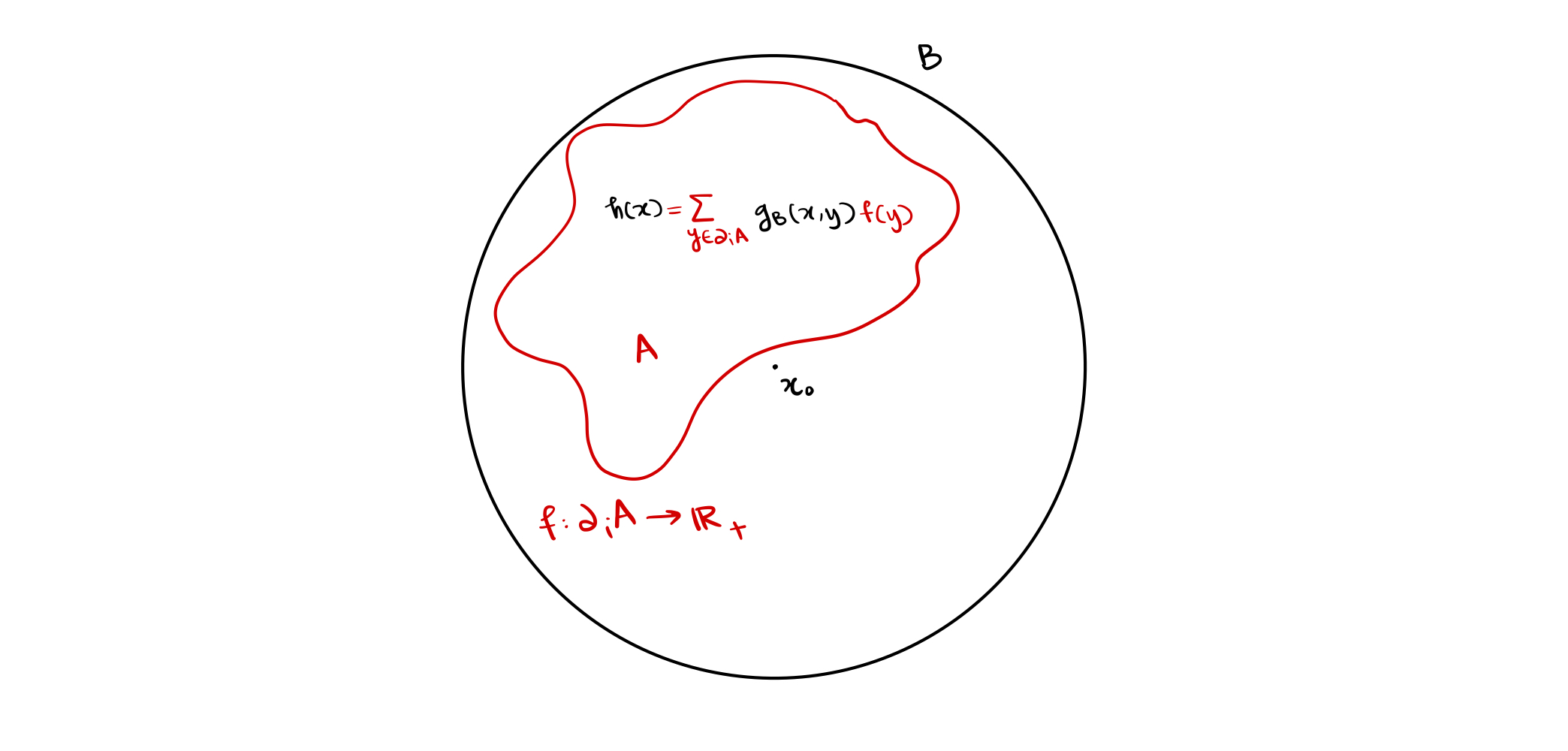}
  \caption{(Balayage Formula) For  harmonic function $h: B \rightarrow R$ and a $A\subset B$ one can find a $f:\partial_i A \rightarrow \R$ such that $h$ is a weighted sum of $f$ in $A$ with the weights given by the Green function.}
\end{figure}

 \begin{proposition}[{\bf Balayage formula}] \label{p:blyge}
  Let $R \geq 1$, $x_0 \in V$, $B= B(x_0,R), A \subsetneq  B$.  Assume $h: \overline{B} \rightarrow \R_+$ with $h$ being harmonic in $B.$ Then there exist $f : \partial_i A \rightarrow \R_+$ such that
  \begin{equation} \label{eq:blyge}
    h(x) = \sum_{y \in \partial_i A} g_B(x,y) f(y)\,\quad \forall \, x \in A
    \end{equation}
\end{proposition}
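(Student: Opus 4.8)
The plan is to exhibit $h$ on $A$ as a Green potential of a nonnegative charge supported on $\partial_i A$, i.e.\ to produce $f$ explicitly as a (normalized) Laplacian of a swept-out version of $h$. The engine is a discrete Green identity for $g_B$. First I would record that the Green function is the fundamental solution of the killed walk: conditioning on the first step in \eqref{eq:greenf} gives, for $x \in B$, the recursion $g_B(x,y) - \delta_{xy} = \sum_{z \in B} p_{xz}\, g_B(z,y)$, so that, with the convention that $g_B(\cdot,y)$ vanishes off $B$, one has $((I-P)g_B(\cdot,y))(x) = \delta_{xy}$ for $x \in B$. Since $P$ is symmetric, $g_B$ is symmetric, and a one-line summation by parts (legitimate because $B$ is finite, so all sums are finite and indeed $\sum_y g_B(x,y) = \E^x[\tau_B] < \infty$) yields the Green identity
\[
  \phi(x) = \sum_{y \in B} g_B(x,y)\,\big((I-P)\phi\big)(y), \qquad x \in B,
\]
valid for every $\phi \colon \Z^d \to \R$ that vanishes off $B$; here $I-P = \Delta$ is the Laplacian from the displayed definition. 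The point is that the boundary terms on $\partial B$ drop precisely because $\phi \equiv 0$ there.

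Next I would construct the swept function. Define $\phi \colon \overline{B} \to \R_+$ by $\phi(y) = \E^y[\,h(X_{T_A})\,;\, T_A < \tau_B\,]$ and set $\phi \equiv 0$ off $B$. Then $\phi = h$ on $A$ (as $T_A = 0$ there), $\phi = 0$ on $\partial B$, and a first-step/strong Markov argument shows $\phi$ is harmonic on $B \setminus A$; hence $(I-P)\phi$ is supported on $\partial_i A$, and I set $f := (I-P)\phi\big|_{\partial_i A}$. Feeding $\phi$ into the Green identity gives, for $x \in A$,
\[
  h(x) = \phi(x) = \sum_{y \in B} g_B(x,y)\,\big((I-P)\phi\big)(y) = \sum_{y \in \partial_i A} g_B(x,y)\, f(y),
\]
which is exactly \eqref{eq:blyge}; it remains only to check $f \ge 0$.

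For nonnegativity, optional stopping applied to the martingale $h(X_{n \wedge \tau_B})$ at the time $T_A \wedge \tau_B$ gives, for $z \in B$, the decomposition $h(z) = \phi(z) + \E^z[\,h(X_{\tau_B})\,;\, \tau_B \le T_A\,]$, and since $h \ge 0$ this forces $\phi \le h$ on $\overline{B}$ (trivially on $\partial B$, where $\phi = 0$). Therefore for $y \in \partial_i A$, using $\phi(y) = h(y)$, the inequality $\phi \le h$ at the neighbours of $y$, and $\Delta h(y) = 0$,
\[
  f(y) = h(y) - \frac{1}{2d}\sum_{z \sim y}\phi(z) \ge h(y) - \frac{1}{2d}\sum_{z \sim y}h(z) = \Delta h(y) = 0 .
\]

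The main obstacle is the bookkeeping in the Green identity: the fundamental-solution relation $(I-P)g_B(\cdot,y) = \delta_{\cdot\,y}$ holds only on $B$, so one must verify that the summation by parts produces no leftover contribution from $\partial B$ (this is the sole reason for insisting that $\phi$ vanish off $B$, and for defining $\phi$ via the killed expectation rather than by a naive harmonic extension). The remaining ingredients --- harmonicity of $\phi$ on $B \setminus A$ and the inequality $\phi \le h$ --- are routine first-step analysis and optional stopping, the latter being exactly where the hypothesis $h \ge 0$ enters and guarantees $f \ge 0$.
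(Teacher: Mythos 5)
Your proposal is correct, and its skeleton is the same as the paper's: both sweep $h$ onto $A$ by setting $h_A(x)=\E^x[h(X_{T_A});\,T_A<\tau_B]$ (your $\phi$), both check that this function agrees with $h$ on $A$, vanishes off $B$, and is harmonic away from $\partial_i A$, both take as the charge exactly $f=(I-P)h_A$ restricted to $\partial_i A$ (the paper writes it as $(I_B-P_1^B)h_A$, which is the same thing under the vanishing convention), and both derive $f\ge 0$ from $h_A\le h$ at the neighbours of $\partial_i A$ combined with $\Delta h=0$; indeed your optional-stopping decomposition $h(z)=\phi(z)+\E^z[h(X_{\tau_B});\,\tau_B\le T_A]$ makes explicit the step the paper leaves implicit in its inequality $Ph(x)\ge Ph_A(x)$. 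The one point of genuine divergence is how the representation $h_A(x)=\sum_{y}g_B(x,y)f(y)$ is obtained. You prove a Green identity $\phi(x)=\sum_{y\in B}g_B(x,y)\bigl((I-P)\phi\bigr)(y)$ by combining the first-step recursion for $g_B$ with the symmetry of $g_B$ (inherited from reversibility of $P$) and summing by parts; this is a clean stationary argument, but it does lean on symmetry, which the paper never invokes. The paper instead argues dynamically: it telescopes $\sum_{k=0}^{n}P_k^Bf=h_A-P_{n+1}^Bh_A$ and kills the error term with the exit-time bound \eqref{eq:tfwp1}, an argument that works verbatim for any substochastic kernel with $\P^x(\tau_B>n)\to 0$, with no symmetry of the Green function required. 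One cosmetic remark: you take $\Delta=I-P$ from the displayed definition, while the paper asserts $\Delta=P-I$ (the paper's own sign inconsistency); since harmonicity is a zero condition and your $f=(I-P)\phi$ coincides with the paper's $f$, nothing is affected.
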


Assuming the above Propositions we are now ready to prove the main
result. The proof is essentially an application of the Balayage
formula and the uniform Green inequality, followed by a standard
chaining of balls argument in $\Z^d$.

\begin{proof}[Proof of Theorem \ref{t:ehi}] We will first consider the case when $R \leq 32$. For  $x, y \in B(x_0, \frac{R}{2})$ such that $x \sim y$ we have
  \begin{equation} \label{eq:slehi}
    h(x) = \sum_{z \sim x} \frac{1}{2d}h(z) \geq \frac{1}{2d} h(y).
    \end{equation}
  Now for arbitary $x_1, x_2 \in B(x_0, \frac{R}{2})$ there exists $z_1, z_2, \ldots z_k$  such that $x_1 \sim z_1 \sim z_2 \ldots z_k \sim y$ with $k \leq 32.$ Using \eqref{eq:slehi}, we have that
$$  h(x_1)  \geq \frac{1}{(2d)^k} h(x_2) \geq \frac{1}{(2d)^{32}} h(x_2) .$$
  As $x_1, x_2$ were arbitrary,
 \begin{equation} \label{eq:mehi}
   \max_{y \in B\left(x_0,\frac{R}{2}\right)} h(y) \leq (2d)^{32} \min_{z\in B\left(x_0,\frac{R}{2}\right)}h(z).
 \end{equation}
Thus  proving \eqref{eq:ehi} whenever $R \leq 32$.  We shall now assume $R > 32$. Set $B=B(x_0,R), A = B\left (x_0, \frac{R}{2} \right).$ Let $x_1,x_2 \in B\left(x_0, \frac{R}{4}\right)$ and $y \in \partial_i A$. Firstly, $$\frac{R }{4} \leq d(x_i,y) \leq \frac{3 R }{4} \qquad \mbox{for $ i = 1,2$} . $$

Applying Proposition \ref{p:ugi}, by \eqref{eq:ugi2} or  \eqref{eq:ugid}, we have that \begin{equation}\label{eq:gh} g_B(x_1,y) \leq \frac{3^{d-2}G_1}{G_2}  g_B(x_2,y) \qquad \mbox{ for all } y \in \partial_i(A).\end{equation}
Using \eqref{eq:blyge} from Proposition \ref{p:blyge}, we have that there exists $f$ supported on $\partial_i A$ such that
\begin{equation*}
  h(x) = \sum_{y \in \partial_i A} g_B(x,y) f(y) \,\quad \forall \, x \in A.
\end{equation*}
Applying \eqref{eq:gh} in the above we have that 
\begin{equation}
h(x_1)= \sum_{y \in \partial_i A} g_B(x_1,y) f(y) \leq  \frac{3^{d-2}G_1}{G_2} \sum_{y \in \partial_i A} g_B(x_2,y) f(y) =  \frac{3^{d-2}G_1}{G_2} h(x_2). \label{eq:hsm}
\end{equation}
whenever $x_1, x_2 \in B(x_0, \frac{R}{4}).$  To complete the proof we use a standard chaining of balls argument.  As $R >32$, it is easy to see that there exists $N \geq 1$ such that for  all $u,v \in B(x_0, \frac{R}{2})$  there exists $z_1, z_2, \ldots, z_N \in B(x_0, \frac{R}{2})$ such that
\begin{figure}
\includegraphics[scale=0.2]{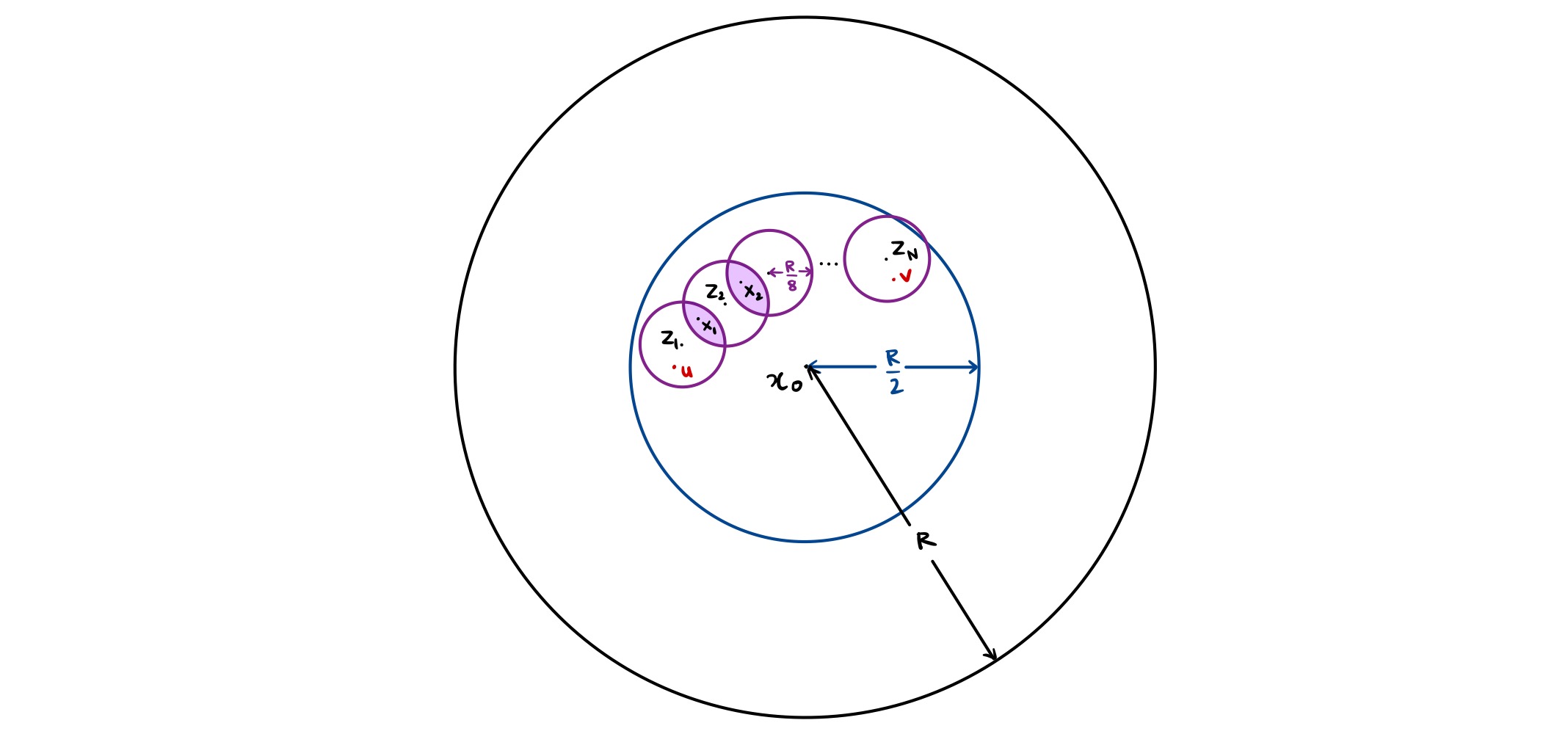}
  \caption{Figure depicts that standard chain of balls argument used in the proof of Theorem \ref{t:ehi}.}
  \end{figure}
  \begin{align*}    
 u \in B\left(z_{1}, \frac{ R }{8}\right),\qquad  v \in B\left(z_{N}, \frac{ R }{8}\right), \qquad  \mbox{ and }  \qquad B\left(z_{j}, \frac{ R }{8}\right) \cap B\left(z_{{j+1}}, \frac{ R }{8}\right) \neq  \emptyset \mbox{ for $1 \leq j \leq N-1.$}
  \end{align*}
Let us choose corresponding arbitrary points $x_{j} \in B(z_{j}, \frac{ R }{8}) \cap B(z_{{j+1}}, \frac{ R }{8})$. Note that $B(z_{j},\frac{ R }{2}) \subset B(x_0, R)$ for all $1 \leq j \leq N-1$. Thus $h$ satisfies the hypothesis when $R$ is replaced by $\frac{R}{2}$ and  $x_0$ is replaced by $z_{j}$. Applying \eqref{eq:hsm}   with $R$ replaced by $\frac{R}{2}$ and  $x_0$ successively replaced by $z_{j}$  we have that  $$ h(u) \leq  \frac{3^{d-2}G_1}{G_2}  h(x_1) \leq   \left ( \frac{3^{d-2}G_1}{G_2} \right)^2 h(x_2)\leq \left ( \frac{3^{d-2}G_1}{G_2} \right)^3 h(x_3) \leq \ldots  \left ( \frac{3^{d-2}G_1}{G_2} \right)^N h(v).$$
As $u,v$ were arbitary we have that, 
 \begin{equation} \label{eq:mehi1}
   \max_{y \in B\left(x_0,\frac{R}{2}\right)} h(y) \leq   \left ( \frac{3^{d-2}G_1}{G_2} \right)^N  \min_{z\in B\left(x_0,\frac{R}{2}\right)}h(z).
 \end{equation}
 We have thus established \eqref{eq:ehi} whenever $R >32$. This completes the proof.
 \end{proof}

\section{Gaussian Bounds} \label{sec:ghb}

In this section we prove Proposition \ref{p:gb} which provides Gaussian
upper and lower bounds for the $n$-step probability function of the simple
random walk. First  we shall use the well known Local Central
Limit Theorem to obtain bounds for the $n$-step probability function near the diagonal. Using this and a chaining of balls argument we will obtain the Gaussian lower bound. For the Gaussian upper bound, we use the near diagonal bounds obtained along with an application of Chernoff bound to a one-dimensional projection of the random walk. 

\subsection{Near diagonal bounds} \label{sec:lclt}

The following is the version of the local central limit theorem we use. We shall say that $n$ and $z \in \Z^d$ have the same parity if $n+|z_1|+\cdots+|z_d|$ is even.

\begin{theorem}[{\bf Local Central Limit Theorem}] \label{thm:lclt_lawler}
Let $\{X_n\}_{n\geq0}$ denote a simple symmetric random walk in $\mathbb{Z}^d$. There exists positive constants $A,B,C$ such that for all positive integers $n$ and points $x,y\in \Z^d$ we have
\begin{equation}
\left| p_n(x,y) -\frac{ A}{n^{\frac{d}{2}}} \exp\left(-\frac{B d(x,y)^2}{n}\right) \right| \leq \frac{C}{n^{\frac{d}{2}+1}},
\end{equation}
whenever $n$ and $y-x$ have the same parity.
\end{theorem}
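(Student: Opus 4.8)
The plan is to prove this by the classical characteristic-function (Fourier inversion) method. By translation invariance it suffices to treat $p_n(0,z)$ with $z=y-x$. Writing the characteristic function of a single step and inverting over the torus, we have
\[
\phi(\theta)=\frac{1}{d}\sum_{j=1}^{d}\cos\theta_j,
\qquad
p_n(0,z)=\frac{1}{(2\pi)^d}\int_{[-\pi,\pi]^d}\phi(\theta)^n\,e^{-i\theta\cdot z}\,d\theta .
\]
The target Gaussian $\tfrac{A}{n^{d/2}}\exp(-B|z|^2/n)$, with $|z|$ the Euclidean norm and $B=d/2$, is exactly the inverse Fourier transform over $\R^d$ of $\exp(-n|\theta|^2/(2d))$. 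Thus the whole problem reduces to comparing $\phi(\theta)^n$ with $\exp(-n|\theta|^2/(2d))$ when integrated against the oscillatory factor $e^{-i\theta\cdot z}$.

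First I would locate where $|\phi|$ attains its maximum value $1$ on the torus: this happens exactly at $\theta=0$ and at $\theta_*=(\pi,\dots,\pi)$, where $\phi(\theta_*)=-1$. I would split the integral into small balls around these two points and a remaining region. Near the origin the expansion $\log\phi(\theta)=-|\theta|^2/(2d)+O(|\theta|^4)$ gives $\phi(\theta)^n=\exp(-n|\theta|^2/(2d))\,(1+O(n|\theta|^4))$; rescaling $\theta=\eta/\sqrt{n}$ and extending the domain to all of $\R^d$ (the Gaussian tails lost are super-polynomially small) produces precisely the main term. Because the walk is symmetric under $\theta\mapsto-\theta$, the odd-order correction vanishes and the first surviving correction is of relative order $1/n$, which is the source of the claimed error $C\,n^{-d/2-1}$. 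Around $\theta_*$ the substitution $\theta=\theta_*+\eta$ gives $\phi(\theta)=-\phi(\eta)$ and $e^{-i\theta\cdot z}=(-1)^{z_1+\cdots+z_d}\,e^{-i\eta\cdot z}$, so that ball contributes $(-1)^{\,n+z_1+\cdots+z_d}$ times the same Gaussian. This is exactly where the parity hypothesis enters: when $n$ and $z$ share parity the two contributions reinforce (doubling $A$), and otherwise they cancel, in agreement with $p_n=0$.

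On the complementary region one has $|\phi(\theta)|\le 1-c$ for some $c>0$, so $|\phi(\theta)|^n\le e^{-cn}$ decays exponentially and is harmlessly absorbed into the error term. I expect the main obstacle to be the uniform control of the Taylor remainder in the central ball: one must show that, integrated against $e^{-i\theta\cdot z}$, the difference $\phi(\theta)^n-\exp(-n|\theta|^2/(2d))$ contributes only $O(n^{-d/2-1})$ \emph{uniformly in} $z$. This forces one to truncate the central ball at radius $\sim n^{-1/2}$, where the quadratic approximation is accurate, to estimate separately the shell where $\exp(-n|\theta|^2/(2d))$ is already tiny, and to track constants so that the relative $1/n$ gain is not lost. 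A final bookkeeping point is that the Fourier computation naturally delivers the Euclidean norm $|z|$ in the exponent; near the diagonal this is comparable to the graph distance through $|z|^2\le d(x,y)^2\le d\,|z|^2$, which is all that the subsequent two-sided estimates of Proposition~\ref{p:gb} require.
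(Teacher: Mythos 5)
Your proposal does more than the paper does at this point: the paper's ``proof'' of Theorem~\ref{thm:lclt_lawler} is a citation of Theorem 1.2.1 in \cite{GL13} (and \cite{g20}), plus a remark that the graph-distance version follows from equivalence of norms. The Fourier-inversion argument you sketch is precisely the argument behind that citation, and its architecture is sound: inversion on the torus, the identification of $\theta=0$ and $\theta_*=(\pi,\dots,\pi)$ as the only points where $|\phi|=1$, exponential decay of $|\phi|^n$ elsewhere, the parity bookkeeping at $\theta_*$ (which correctly accounts both for the doubling of $A$ and for the vanishing of $p_n$ at the wrong parity), and the observation that the evenness of $\phi$ kills the cubic term so that the first correction is of relative size $1/n$, matching the claimed error $Cn^{-d/2-1}$.

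Two points need repair. First, the truncation radius: you propose cutting the central ball at radius $\sim n^{-1/2}$, but at that scale the Gaussian $\exp(-n|\theta|^2/(2d))$ keeps a constant fraction of its mass outside the ball, so the discarded tail is \emph{not} super-polynomially small. You need a radius $r_n$ with $nr_n^2\gg\log n$ (so the Gaussian and $|\phi|^n$ tails are negligible) and $nr_n^4\lesssim 1$ (so the quartic remainder contributes only a relative $O(1/n)$); for instance $r_n=n^{-1/4}$ works and $n^{-1/2}$ does not. Uniformity in $z$, which you single out as the main obstacle, is in fact automatic once you bound the integrand in absolute value, since $|e^{-i\theta\cdot z}|=1$. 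Second, passing from the Euclidean norm that the Fourier computation delivers to the graph distance in the statement is not mere bookkeeping, and the paper's own remark that it ``easily follows'' from $c_1\|x-y\|_2\le d(x,y)\le c_2\|x-y\|_2$ is too quick: taking $z$ on a coordinate axis forces the constants $A,B$ to be the Euclidean ones, and then for $z=(k,\dots,k)$ with $k^2/n$ of order one the two candidate main terms differ by a quantity of order $n^{-d/2}$, which an additive error of $Cn^{-d/2-1}$ cannot absorb. What does survive the change of norm --- and what you correctly note is all that is used downstream --- are the one-sided consequences: the upper bound $p_n\le (A+C)n^{-d/2}$ and, since $\|y-x\|_2\le d(x,y)$, the lower bound with $d(x,y)$ in the exponent. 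Stating the theorem with the Euclidean norm (as \cite{GL13} does) and deriving \eqref{eq:ulclt} and \eqref{eq:llclt} of Proposition~\ref{p:lclt} directly from that version removes both difficulties.
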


\begin{proof}
We  refer the reader to Theorem 1.2.1 in \cite{GL13} for the proof (or for more detailed explanation of the same in \cite{g20}). We note that the statement in \cite{GL13} has $d(x,y)$ to be standard Euclidean metric namely  $ || \cdot ||_2$. The above statement where $d(x,y)$ is the graph distance easily follows because we have $c_1||x-y||_2\leq d(x,y) \leq c_2 ||x-y||_2$ for some constants $c_1>0$ and $c_2>0$. 
\end{proof}

\begin{proposition}[{\bf Near diagonal bounds}] \label{p:lclt}
    There exists $N_1 >0$ such that for all $x,y \in \Z^d, n \geq 0$
    \begin{equation} \label{eq:ulclt}
      p_n(x,y) \leq \frac{N_1}{(\max\{n,1\})^{\frac{d}{2}}},
    \end{equation}
    and
   there exists $N_2 >0$ and $L \in (0,1)$ such that for all $x,y \in \Z^d, n \geq \frac{1}{L^2} \max\{1,d^2(x,y)\}$
        \begin{equation}\label{eq:llclt}
      p_n(x,y) +p_{n+1}(x,y) \geq \frac{N_2}{n^{\frac{d}{2}}}. \end{equation}
  \end{proposition}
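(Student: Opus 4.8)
The plan is to read both estimates off directly from the Local Central Limit Theorem (Theorem \ref{thm:lclt_lawler}), the only genuine subtlety being the parity constraint: $p_n(x,y)=0$ whenever $n$ and $y-x$ have opposite parity, which is precisely why the lower bound is phrased in terms of $p_n+p_{n+1}$ rather than $p_n$ alone.

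For the upper bound \eqref{eq:ulclt} I would first dispose of $n=0$, where $p_0(x,y)=\mathbf{1}_{\{x=y\}}\leq 1$ and $\max\{n,1\}=1$, so any $N_1\geq 1$ works. For $n\geq 1$, if $n$ and $y-x$ have opposite parity then $p_n(x,y)=0$ and there is nothing to prove; otherwise Theorem \ref{thm:lclt_lawler} applies, and bounding the exponential by $1$ gives
$$ p_n(x,y) \leq \frac{A}{n^{d/2}}\exp\left(-\frac{B\, d(x,y)^2}{n}\right) + \frac{C}{n^{d/2+1}} \leq \frac{A+C}{n^{d/2}}, $$
so $N_1=\max\{1,A+C\}$ suffices.

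For the lower bound \eqref{eq:llclt} the key observation is that the hypothesis $n\geq \frac{1}{L^2}\max\{1,d^2(x,y)\}$ does two jobs at once: it forces $n\geq 1/L^2$ to be large, and it controls the Gaussian exponent, since it yields $d(x,y)^2/n\leq L^2$ and hence $\exp(-B\,d(x,y)^2/n)\geq e^{-BL^2}\geq e^{-B}$. Exactly one of $n,n+1$ has the same parity as $y-x$; calling it $m\in\{n,n+1\}$ and applying Theorem \ref{thm:lclt_lawler} at $m$ (using $d(x,y)^2/m\leq d(x,y)^2/n\leq L^2$) gives
$$ p_m(x,y) \geq \frac{A}{m^{d/2}}e^{-BL^2} - \frac{C}{m^{d/2+1}} = \frac{1}{m^{d/2}}\left(A e^{-BL^2} - \frac{C}{m}\right). $$
I would then choose $L\in(0,1)$ small enough that $1/L^2 \geq \tfrac{2C}{A}e^{B}$; since $m\geq n\geq 1/L^2$, this makes the error term at most half the main term, so $p_m(x,y)\geq \tfrac{A}{2}e^{-B}\,m^{-d/2}$. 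Finally $m\leq n+1\leq 2n$ yields $m^{d/2}\leq 2^{d/2}n^{d/2}$, whence $p_n(x,y)+p_{n+1}(x,y)\geq p_m(x,y)\geq \frac{A e^{-B}}{2^{d/2+1}}\,n^{-d/2}$, giving $N_2 = A e^{-B}/2^{d/2+1}$.

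The main point to get right is the interplay in the lower bound between the parity shift and the choice of $L$: one must check that passing from $n$ to $n+1$ neither breaks the parity matching nor degrades the Gaussian exponent (both hold since $n+1\leq 2n$ and $d(x,y)^2/(n+1)\leq L^2$), and that taking $L$ small is exactly the device that guarantees $n$ is large enough for the $O(n^{-d/2-1})$ error term in the LCLT to be absorbed into half of the main term. Everything else is a routine estimate.
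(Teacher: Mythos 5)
Your proof is correct and follows essentially the same route as the paper: for \eqref{eq:ulclt} bound the exponential by $1$ in the Local Central Limit Theorem, and for \eqref{eq:llclt} apply the theorem at the correct-parity index $m\in\{n,n+1\}$, using the hypothesis $n\geq \frac{1}{L^2}\max\{1,d^2(x,y)\}$ both to control the Gaussian exponent via $d^2(x,y)/m\leq L^2$ and to absorb the $O(m^{-d/2-1})$ error term into the main term by taking $L$ small. Your additional bookkeeping (the $n=0$ and opposite-parity cases in the upper bound, and the explicit threshold $1/L^2\geq \frac{2C}{A}e^{B}$ in place of the paper's implicit choice of $\delta$) merely makes the constants concrete and changes nothing in substance.
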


\begin{proof}
We first establish equation \eqref{eq:ulclt} assuming  Theorem \ref{thm:lclt_lawler}. Choose $x,y\in\mathbb{Z}^d$. We want to bound $p_n(x,y) = \mathbb{P}^x(X_n = y)$ from above. Using the Local Central Limit Theorem we get
\begin{align*}
p_n(x,y) &\leq \frac{C}{n^{\frac{d}{2}+1}} + \frac{ A}{n^{\frac{d}{2}}} \exp\left(-\frac{B d(x,y)^2}{n}\right)  \leq \frac{C+A}{n^{\frac{d}{2}}}.
\end{align*}
Thus the proof of \eqref{eq:ulclt} is complete.

Next we show \eqref{eq:llclt}. Choose $x = (x_1,\ldots,x_d),y = (y_1,\ldots,y_d)\in\mathbb{Z}^d$. We want find a lower bound for $p_n(x,y)+ p_{n+1}(x,y)= \mathbb{P}^x(X_n = y)+ \mathbb{P}^x(X_{n+1}= y)$. Since the simple random walk on $\mathbb{Z}^d$ has period $2$, exactly one of $p_n(x,y)$ and $p_{n+1}(x,y)$ will be nonzero. The first is nonzero  if $n+|y_1-x_1|+\cdots+|y_d-x_d|$ is even and the second if it is odd, in which case, $n+1+|y_1-x_1|+\cdots+|y_d-x_d|$ is even. Let $m$ be either $n$ or $n+1$ such that $p_m(x,y)$ is nonzero, so that $m$ and $y-x$ have the same parity. 

It suffices to show that there exists $c>0$ and $L\in(0,1)$ such that for all $x,y \in \Z^d$ and $n\geq \frac{1}{L^2}\max\{1,d^2(x,y)\}$, we have  $$p_{m}(x,y) \geq \frac{c}{n^{\frac{d}{2}}}.$$ Using the Local Central Limit Theorem  we have
\begin{align}
p_m(x,y) &\geq \frac{ A}{m^{\frac{d}{2}}} \exp\left(-\frac{B d(x,y)^2}{m}\right)  -\frac{C}{m^{\frac{d}{2}+1}} \nonumber \\
&\geq \frac{ 1}{n^{\frac{d}{2}}} \left(c_1 \exp\left(-\frac{c_2 d(x,y)^2}{n}\right) -\frac{c_3}{n} \right). \label{eq:upperbound}
\end{align} 

Now, let $n \geq \frac{1}{L^2} \max\{1,d^2(x,y)\}$, for some $L\in (0,1)$ that will be determined later. So we have $L^2 \geq \frac{1}{n} \max\{1,d^2(x,y)\}$. Replacing this in \eqref{eq:upperbound} we get
\begin{equation}
p_m(x,y)  \geq \frac{1}{n^{\frac{d}{2}}} \left( c_1\exp(-c_2L^2) -c_3L^2 \right) \nonumber
\end{equation}

Now choose $0 < \delta <1 $ such that for $0<L<\delta$ we have $$c_1\exp(-c_2L^2)>\frac{c_1}{2} \qquad \mbox{ and }\qquad  c_3L^2<\frac{c_1}{4}.$$ So for $0<L<\delta$ we get

\begin{equation*}
p_m(x,y)  \geq \frac{c_1}{2n^{\frac{d}{2}}}. 
\end{equation*}
The proof of \eqref{eq:llclt} is complete.
\end{proof}

\subsection{Lower Bound}
To establish the Gaussian lower bound we consider three cases, divided according to how $R=d(x,y)$ compares with $n$. For the third case we apply a chaining argument, and for this we require the following estimate on the volume of a ball in $\Z^d$. Namely, there exists a constant $V_1\leq 2d$ such that for all $x_0 \in \Z^d$ and $r\geq 1$ we have
\begin{equation} \label{eq:va}
 \left|B(x_0,r)\right|\geq V_1 r^d.
\end{equation}

The below proof follows the same structure as in the proof of \cite[Proposition 4.38]{MB17}.
\begin{proof}[Proof of Proposition~\ref{p:gb}(a)]

Let $x,y \in \Z^d, R=d(x,y)$ and let $L >0$ be as in \eqref{eq:llclt}.

\bigskip
 
{\bf Case 1: } Let  $R \leq n \leq \frac{2^6}{L^2}R$. Then,
$$p_{n}(x,y)+ p_{n+1}(x,y)  \geq \left (\frac{1}{2d} \right)^{n+1} \geq c_1 \exp(-c_2 n)  \geq \frac{c_1}{n^{\frac{d}{2}}}\exp\left(-c_3\frac{R^2}{n}\right).$$
This implies \eqref{eq:lhkb}. 

\bigskip
{\bf Case 2:} Let $n \geq \frac{R^2}{L^2}$. In this case \eqref{eq:llclt} implies \eqref{eq:lhkb}.

\bigskip

\begin{figure} 
  \includegraphics{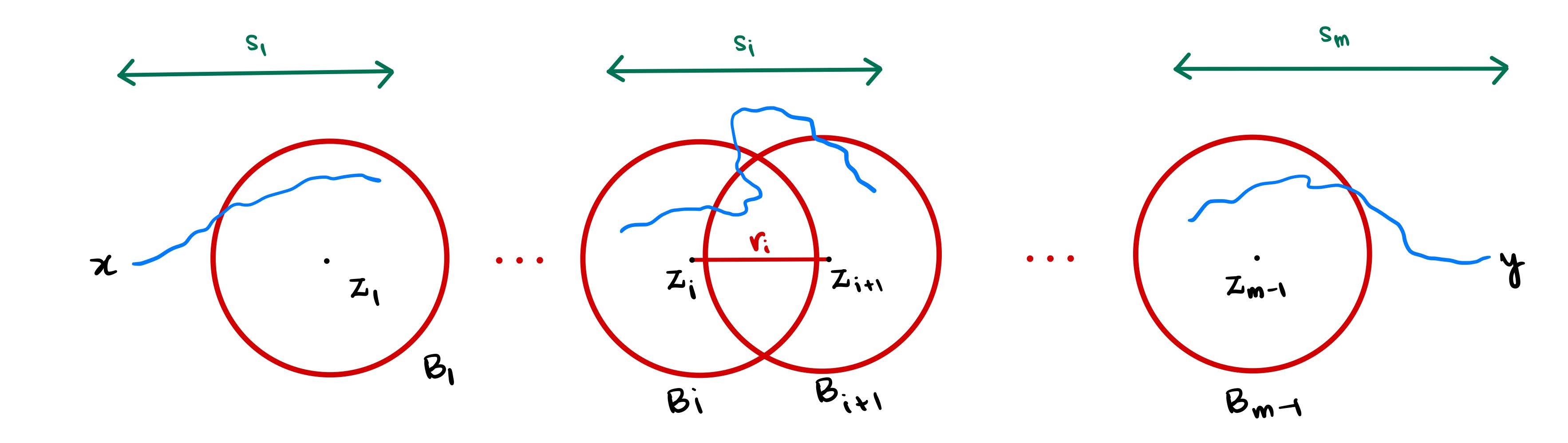}
  \caption{The basic idea of chaining of balls in the proof of   Case 3 in Proposition \ref{p:gb}(a) is to divide the random walk path into $m \approx \frac{R^2}{n}$ steps. This will imply that length of each segment $r_i$ above are approximately $\frac{R}{m}$,  the time taken $s_i$ above are approximately $\frac{n}{m}$,  and we can use \eqref{eq:llclt} for each segment $i$, $1 \leq i \leq m$ to obtain the result. The details of the proof ensures that there exists $m, r_i, s_i$ are all natural numbers. }
  \label{fig:cb}
\end{figure}

{\bf Case 3:} Let $\frac{2^6}{L^2} R \leq  n \leq \frac{R^2}{L^2}$. See Figure \ref{fig:cb} for the basic idea of the chaining balls of argument used in the proof here. Given $n$ in this region choose $m \in \N$ such that 
\begin{equation}
\frac{2^5R^2}{L^2n} \leq m \leq \frac{2^6R^2}{L^2n}. \label{eq:cm}\\
\end{equation} 

Note that because  $ n \geq \frac{2^6}{L^2} R$, we have $ 1 \leq m \leq R.$ Set $r = \lfloor \frac{R}{m} \rfloor$ and  $s = \lfloor \frac{n}{m} \rfloor.$ Now, there exists 
 \begin{eqnarray*}
\mbox{$z_1, z_2, \ldots, z_{m}$:}&x=z_0, \quad y = z_m,\quad d(z_i,z_{i+1}) \in \{r, r+1\},  \\
\mbox{and}&& \nonumber\\
\mbox{$s_1, s_2, \ldots, s_m$:}& s_i \in \{s,s+1\}, \quad \sum_{i=1}^m s_i =n.
\end{eqnarray*}
Let $B_i = B(z_i,r)$ for $ 0 \leq i \leq m-2$.
 By considering all possible paths from $x$ to $y$ and the definition of $p_n$ we have that
 
$$
p_n(x,y) + p_{n+1}(x,y) \geq \P^x(X_{s_1} \in B_1) \cdot  \prod_{i=1}^{m-2} \min_{w \in B_i}\P^w (X_{s_{i+1}} \in B_{i+1}) \cdot  \min_{w \in B_{m-1}} \P^w (X_{s_{m}} =y).  \label{eqa:step1}
$$
Next, by our choice of $m$, the  lower bound in \eqref{eq:cm} implies that
\begin{equation}
(3r+1)^2 \leq 16r^2 \leq 16 \frac{R^2}{m^2} \leq  L^2 \frac{n}{2m} \leq L^2 s. \label{eq:crsl}
\end{equation}
 Since we have that $d(w,v) \leq 3r+1$ for all $w \in B_i$ and $v \in B_{i+1}$, the lower bound for $s$ in \eqref{eq:crsl} implies that $s \geq \frac{1}{L^2} d(w,v)^2$. Hence from \eqref{eq:llclt} we have  that there exists $0 < c_1 < 1$ such that
\begin{equation} \label{eq:kest}
\min_{w \in B_i}\P^w (X_{s_{i+1}} \in B_{i+1})  \geq \frac{c_1}{s^{\frac{d}{2}}} \qquad \mbox{for all} \qquad 0 \leq i \leq m-2. 
\end{equation}	
Using the volume bound from \eqref{eq:va} and  \eqref{eq:kest} in \eqref{eqa:step1} we have
\begin{eqnarray}\label{eqa:kestb}
p_n(x,y)&\geq&  \left(\frac{c_1 V_1 r^d}{ \left( s_1 \right)^\frac{d}{2}}\right) \quad\cdot \quad \prod_{i=1}^{m-2}\left(\frac{c_1 V_1 r^d}{\left( s_i \right)^\frac{d}{2}}\right) \quad\cdot \quad  \left(\frac{c_1}{\left( s_m \right)^\frac{d}{2}}\right).
\end{eqnarray}
By our choice of $m$, the  upper bound in \eqref{eq:cm} implies that
\begin{equation}
(16)^2r^2 \geq (16)^2 \frac{R^2}{4m^2} \geq  64 L^2 \frac{n}{64 m} \geq  L^2 s. \label{eq:crsu}
\end{equation}
Using \eqref{eq:crsu}, the fact that $s_i \geq s$ for all $1\leq i \leq m-1$, and $s_m \leq n$ in \eqref{eqa:kestb} we have
\begin{equation}
p_n(x,y)\geq \frac{c_1}{16^d} \left( \frac{c_1L^d V_1}{16^d }\right)^{m-1}\frac{1}{n^\frac{d}{2}}
=\frac{c_2}{n^{\frac{d}{2}}} \exp(-c_3 m) \label{eq:kestf}
\end{equation}
for some $c_2, c_3 >0.$ Note that $c_3 >0$ because $V_1 \leq 2d$ from \eqref{eq:va}. Inserting the lower bound  of $m$  from \eqref{eq:cm} in \eqref{eq:kestf} implies \eqref{eq:lhkb}. 

 \end{proof}

  \begin{remark} The choice of the constant  $\frac{2^6}{L^2}$ in Case 3 is not adhoc. One could derive it post-facto in the proof as well. To use the Local Central Limit Theorem in the chain of balls of argument we need a choice of $m$  such that $$\mbox{$3r+1 \leq L \sqrt{s} \leq  16r,$ \qquad \mbox{\em ({\em see}  \eqref{eq:crsu} {\em and }  \eqref{eq:crsl})}.}$$ This can be achieved if $$ 4r \leq \frac{1}{L} \sqrt{\frac{n}{2m}} \mbox{ and } \frac{1}{L} \sqrt{\frac{n}{m}} \leq 8 \frac{R}{m},$$ as $ \frac{n}{2m} \leq s \leq \frac{n}{m}$ and $ \frac{R}{2m} \leq r \leq \frac{R}{m}.$ 
      This is possible if $m$ is an integer as in $$\frac{2^5R^2}{L^2n} \leq m \leq \frac{2^6R^2}{L^2n}, \qquad \mbox{ \em ({\em see} \eqref{eq:cm}).}$$ This forces the choice of the constant in Case 3.
      \end{remark}

\subsection{Upper Bound}

Recall that, $B= B(x_0,R)$ and $\tau_B = \inf\{ k \geq 0: X_k \in B^c\}$.
We note that $\tau_B$ is finite with probability one because
\begin{equation} \label{eq:tfwp1}
  \P(\tau_B>n)\leq (1-p)^{\lfloor \frac{R}{n} \rfloor}
\end{equation}
with (say) $p=(\frac{1}{2d})^{3R}$. We will need the following lemma for the proof of the upper bound.

\begin{lemma} \label{l:hbound} 
  $\P^{x}(\tau_B<n) \leq 2d \exp(-\frac{R^2}{4dn}).$
\end{lemma}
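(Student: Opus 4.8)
The plan is to reduce the $d$-dimensional exit problem to $d$ one-dimensional problems and to run a Chernoff/Doob argument on each. I take the walk started at the centre $x_0$ (this is the case in which the estimate is used; the stated bound, depending only on $R$ and $n$, cannot hold for a general starting point near $\partial B$), and I write $Y^{(i)}_k = X^{(i)}_k - x_0^{(i)}$ for the $i$-th coordinate of the displacement, so that $Y^{(i)}_0 = 0$ and $d(X_k,x_0) = \sum_{i=1}^d |Y^{(i)}_k|$. First I would dispose of a trivial regime: each step changes the $\ell^1$-displacement by at most one, so $d(X_k,x_0)\le k$ and the walk cannot leave $B(x_0,R)$ before time $R+1$; hence $\P^{x_0}(\tau_B<n)=0$ whenever $n\le R+1$, and we may assume $n>R$.

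Next I would perform the reduction by a union bound. If $d(X_k,x_0)>R$ then $\sum_i|Y^{(i)}_k|>R$, so by pigeonhole some coordinate satisfies $|Y^{(i)}_k|>R/d$. Taking a union over the times $k\le n$, over the $d$ coordinates, and over the two signs, and using that all coordinates have the same law,
\[
\P^{x_0}(\tau_B<n)\le\sum_{i=1}^d\P\Big(\max_{0\le k\le n}|Y^{(i)}_k|>\tfrac{R}{d}\Big)\le 2d\,\P\Big(\max_{0\le k\le n}Y^{(1)}_k>\tfrac{R}{d}\Big),
\]
which accounts for the prefactor $2d$. Then I would control the one-dimensional maximum by the exponential-submartingale method. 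Each $Y^{(1)}_k=\sum_{j\le k}\xi_j$ is a sum of i.i.d. increments with $\P(\xi=\pm1)=\tfrac1{2d}$ and $\P(\xi=0)=1-\tfrac1d$, so its moment generating function is $M(\lambda)=\E e^{\lambda\xi}=1+\tfrac1d(\cosh\lambda-1)\ge 1$; thus $e^{\lambda Y^{(1)}_k}$ is a non-negative submartingale and Doob's maximal inequality gives, for every $\lambda>0$,
\[
\P\Big(\max_{0\le k\le n}Y^{(1)}_k>\tfrac{R}{d}\Big)\le e^{-\lambda R/d}\,M(\lambda)^n.
\]
The crucial estimate is $M(\lambda)\le e^{\lambda^2/d}$, coming from $\cosh\lambda-1\le\lambda^2$ and $1+x\le e^x$; optimising the exponent $-\lambda R/d+n\lambda^2/d$ gives the minimiser $\lambda=R/(2n)$ and the value $e^{-R^2/(4dn)}$.

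The only point that needs care—and the main obstacle—is matching the constant $\tfrac{1}{4d}$ rather than something weaker. Two things conspire here. First, the bound $\cosh\lambda-1\le\lambda^2$ is valid only for moderate $\lambda$, whereas the optimiser is $\lambda=R/(2n)$; this is exactly where the trivial-regime step pays off, since the estimate is needed only for $n>R$, forcing $\lambda=R/(2n)<\tfrac12$, comfortably inside the valid range. Second, one must retain the laziness of the coordinate walk through the exact factor $M(\lambda)=1+\tfrac1d(\cosh\lambda-1)$: the crude subordination $Y^{(1)}_k=W_{N_k}$ with $W$ a non-lazy simple walk and the bound $N_k\le k$ would only yield the weaker exponent $R^2/(2d^2n)$. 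Combining the three steps gives $\P^{x_0}(\tau_B<n)\le 2d\,e^{-R^2/(4dn)}$, as claimed.
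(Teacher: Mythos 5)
Your proof is correct and follows essentially the same route as the paper's: reduce to the coordinate processes (lazy walks on $\Z$), apply a union bound over coordinates and signs to get the factor $2d$, bound the moment generating function by $e^{\lambda^2/d}$, and optimise at $\lambda = R/(2n)$. The only difference is cosmetic: you control the running maximum via Doob's inequality for the submartingale $e^{\lambda Y_k}$, whereas the paper conditions on the exit time $\{\tau_S^+=k\}$ and uses Jensen's inequality; your explicit remarks that the bound is used from the centre $x_0$ and that $n>R$ may be assumed (so $\lambda<1$) are both consistent with the paper's implicit assumptions.
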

\begin{proof}
  We first reduce the question for the $d$ dimensional random walk  to a one dimensional walk as follows.
  Without loss of generality we may assume $x_0=0$ and consider $B= B(0,R)$. Let $\{X_n\}_{n \geq 0}$ be the simple random walk on $\Z^d$ with  $X_0=0$.  Now, let $\{Y_n\}_{n\geq 0}$ be the lazy random walk on $\Z$,  starting from $0$ with transition matrix given by $q$ 
  \[ q(u,v) = \left \{ \begin{array}{ll}
    \frac{1}{2d} &\mbox{ if }  u = v \pm 1,\\
    &\\
    \frac{d-1}{d} & \mbox{ if } u = v,\\
    &\\
    0& \mbox{otherwise.}
  \end{array} \right. \]
Let $X_n(k)$ denote the $k$th coordinate of $X_n$ for $1\leq k \leq d$.  Note that for each $k$,  $\{X_n(k)\}_{n \geq 0}$ have the same distribution as $\{Y_n\}_{n \geq 0}$. Also, if $d(X_n,0)>R$ then there is a coordinate $1 \leq k \leq d$ for which $X_n(k)\geq \frac{R}{d}$, consequently using a union bound
\begin{equation} \label{eq:reduction}
\P(\tau_{B}\leq n)\leq d  \P(\tau_{\frac{R}{d}}\leq n)
\end{equation}
where $\tau_S= \inf\{k \geq 0 : Y_k \not \in [-S,S]\}$ for any $S >0$.

We will understand $\tau_S$ for any $S=\frac{R}{d}$ and for that we define two auxiliary stopping times:$$ \tau_S^+ = \left \{ \begin{array}{ll}\tau_S & \mbox{ if } Y_{\tau_S}>0,\\
  \infty& \mbox{ otherwise,}\end{array} \right. \qquad \mbox{ and } \qquad \tau_S^- = \left \{ \begin{array}{ll}\tau_S & \mbox{ if } Y_{\tau_S}<0,\\ \infty & \mbox{ otherwise.} \end{array} \right. $$
Further it is easy to see that
$$\{\tau_S\leq n\} = \{\tau_S^+ \leq n\}\dot\cup \{\tau_S^+ \leq n\}$$ implying
$$\P(\tau_S\leq n)=\P(\tau_S^+ \leq n)+\P(\tau_S^- \leq n).$$
We will estimate $\P(\tau_S^+ \leq n)$ and the other term will follow similarly.

For $S \leq k \leq n,$ let $ A_k = \{\tau^+_S = k\}.$ Note that $\P(A_k) >0$ and $$\{\tau_S^+\leq n\} = \cup_{k=S}^n A_k.$$
Note that,
$$ \E[Y_n|A_k] = \E[Y_k + Y_n-Y_k|A_k] \geq S + \E[Y_n -Y_k]= S  $$
 Let $\lambda >0$ and define$$T_n=\exp(\lambda Y_n)\qquad \mbox{ for } S \leq n.$$
By Jensen's inequality,
\begin{equation*}
  \E[T_n] = \sum_{k=S}^n{\E[T_n|A_k]\P(A_k)} \geq \sum_{k=S}^n \exp(\lambda \E[Y_n|A_k]) \P(A_k)= \exp(\lambda S)  \P(\tau_S^+\leq n)
    \end{equation*}
This implies,
\begin{equation}\label{eq:Chernoff}
    \P(\tau_S^+\leq n)\leq \frac{\E[T_n]}{\exp(\lambda S)}.
\end{equation}
Now it is standard that $$\E[T_n] = \left(\frac{d-1}{d}+\frac{\exp(\lambda)}{2d}+\frac{\exp(-\lambda)}{2d}\right)^n.$$
For $0 < \lambda < 1,$ we obtain
$$\frac{d-1}{d}+\frac{\exp(\lambda)}{2d}+\frac{\exp(-\lambda)}{2d} = \frac{d-1}{d}+ \frac{1}{d}\sum_{k=0}^\infty \frac{\lambda^{2k}}{(2k)!} = 1 + \frac{\lambda^2}{d} \left(\sum_{k=1}^\infty \frac{\lambda^{2(k-1)}}{(2k)!} \right) \leq    1+\frac{\lambda^2}{d},$$
as $\sum_{k=1}^\infty \frac{\lambda^{2(k-1)}}{(2k)!}< \sum_{k=1}^\infty \frac{1}{(2k)!} < 1.$ Therefore,
\begin{align*}
    \E[T_n] \leq  \left(1+\frac{\lambda^2}{d}\right)^n  \leq \exp\left(n  \frac{\lambda^2}{d}\right). 
\end{align*}
Substituting this estimate in $\eqref{eq:Chernoff}$ and choosing $\lambda=\frac{dS}{2n}$ (note that $\lambda < 1 $) gives  
 
$$\P(\tau_S^+\leq n)\leq \exp\left(n\frac{\lambda^2}{d}-\lambda S\right) = \exp\left (-\frac{dS^2}{4n} \right).$$
Using symmetry of the distribution of the walk, we also have $$\P(\tau_S^-\leq n)\leq \exp\left(-\frac{dS^2}{4n}\right).$$ Therefore, we get
$$\P(\tau_S\leq n)\leq 2\exp\left(-\frac{dS^2}{4n}\right).$$ As $S=\frac{R}{d}$ in \eqref{eq:reduction} we get that,

$$\P(\tau_{B}\leq n)\leq d \times \P\left(\tau_{\frac{R}{d}}\leq n \right)\leq 2d\exp\left(-\frac{R^2}{4dn}\right),$$ as required. 

\end{proof}

We are now ready to prove Proposition \ref{p:gb} (b). The below proof follows the same structure as in the proof of \cite[Theorem 4.34]{MB17}.

\begin{proof}[Proof of Proposition \ref{p:gb} (b)]  Let $R = d(x,y)$. As in part (a),  we consider three cases.

{\bf Case 1:} $n <R$. Here $p_n(x,y) = 0$ and \eqref{eq:uhkb} follows immediately. 

{\bf Case 2:} $n \geq R^2$. This would imply $\frac{d^2(x,y)}{n} < 1$ which inturn implies that
$$\exp(-1)\leq \exp \left(-\frac{d^2(x,y)}{n}\right) \leq 1.$$ From above and \eqref{eq:ulclt} we have

  $$ p_n(x,y) \leq \frac{N_1}{n^{\frac{d}{2}}} \leq \frac{c_1}{n^{\frac{d}{2}}} \exp\left(-\frac{d^2(x,y)}{n}\right).$$
\eqref{eq:uhkb} follows.

{\bf Case 3:} $R \leq n \leq R^2$. Let $m \in \N$ be such that
\begin{equation}\label{eq:ucm}
\frac{n}{3} \leq m \leq \frac{2n}{3}.
\end{equation}
Define the set $A_x = \{ z \in \Z^d : d(x,z) \leq d(y,z)\}$ and $A_y = A_x^c$. Then
using time reversibility of the random walk we have
\begin{align}\label{eq:tr}
p_n(x,y) &= \P^x(X_m \in A_y, X_n =y) +\P^x(X_m \in A_x, X_n =y) \nonumber\\
&=\P^x(X_m \in A_y, X_n =y) +\P^y(X_{n-m} \in A_x, X_n =x)
\end{align}
Let ${\cal F}_m = \sigma(X_1, X_2, \ldots, X_m)$. Using the Markov Property we have that
\begin{align*}
\P^x(X_m \in A_y, X_n =y) & = \P^x(\tau_{B\left(x, \frac{R}{2} \right)} \leq m, X_m \in A_y, X_n =y)\\
& = \E^x \left( \E^x\left(1(\tau_{B\left(x, \frac{R}{2} \right)} \leq m, X_m \in A_y) \, 1(X_n =y) | {\cal F}_m \right)\right)\\
& = \E^x\left(1(\tau_{B\left(x, \frac{R}{2} \right)} \leq m, X_m \in A_y) \, \E^x\left( 1(X_n =y) | {\cal F}_m \right)\right)\\
 & = \E^x\left(1(\tau_{B\left(x, \frac{R}{2} \right)} \leq m, X_m \in A_y) \, \E^{X_m}(1(X_n =y)) \right)\\
&\leq\, \P^x(\tau_{B\left(x, \frac{R}{2} \right)} \leq m) \sup_{z \in A_y} p_{n-m}(z,y).
\end{align*} 
and similarly
$$ \P^y(X_{n-m} \in A_x, X_n =x) \, \leq \, \P^y(\tau_{B\left(y, \frac{R}{2}\right)} \leq n-m) \sup_{z \in A_x} p_{m}(x,y).$$
Using the above, along with Lemma \ref{l:hbound} and \eqref{eq:ulclt} in \eqref{eq:tr} we have that there exists $c_1,c_2,c_3, c_4 >0$ such that 
 \begin{align*}  
p_n(x,y) \leq  c_1 \exp\left(-c_2 \frac{R^2}{m}\right) \frac{N_1}{(n-m)^{\frac{d}{2}}} + c_3 \exp\left(-c_4 \frac{R^2}{n-m}\right) \frac{N_1}{m^{\frac{d}{2}}}.
 \end{align*}
\eqref{eq:uhkb} then follows using the bounds on $m$ from \eqref{eq:ucm}.
\end{proof}

\section{Uniform Green Inequality} \label{sec:ugi}

In this section we shall prove Proposition \ref{p:ugi}. To prove the
result we obtain Gaussian bounds for the $n$-step killed probability
function.

Recall that $B=B(x_0,R)$ and for any $x,y \in \Z^d$ the $n$-step killed probability function is given by,
    $$p^B_n(x,y) = \P^x(X_n = y, n < \tau_B).$$
Clearly, $p_n^B (x,y) \leq p_n(x,y).$ So the Gaussian upper
    bounds follows from Proposition \ref{p:gb}. The Gaussian lower bound is not that immediate and is presented below.

\begin{lemma}[Killed -Gaussian Lower Bound] \label{l:kdb} Let $ d \geq 1$.  $\exists A >0, C >0$ such that for all $R \geq 2$, $x_0 \in \Z^d$, and $B= B(x_0,R)$, $x,y \in B^\prime= B(x_0, \frac{R}{2})$, $\max\{ 1, d(x,y)\} \leq n \leq R^2$
        \begin{equation} \label{eq:kldb}
      p^B_n(x,y) +p^B_{n+1}(x,y) \geq \frac{A}{n^{\frac{d}{2}}}\exp\left(-C\frac{d^2(x,y)}{n}\right)     \end{equation}
\end{lemma}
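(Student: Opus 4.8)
The upper bound being inherited from $p^B_n\le p_n$ and Proposition~\ref{p:gb}, only the lower bound \eqref{eq:kldb} is at issue. The plan is to rerun the chaining proof of Proposition~\ref{p:gb}(a), but to confine the walk to $B$ at every leg so that the Gaussian lower bound survives the killing. The tempting shortcut---writing $p^B_n(x,y)=p_n(x,y)-\P^x(\tau_B\le n,X_n=y)$ and discarding the exit term by Lemma~\ref{l:hbound}---does \emph{not} work: the decay rate $U_2$ in the upper bound \eqref{eq:uhkb} need not dominate the rate $L_2$ in the lower bound \eqref{eq:lhkb}, so when $d(x,y)$ is comparable to $R$ the discarded term is a non-negligible fraction of the main term. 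This forces a scale-localised argument.

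I would first record a geometric fact used throughout: since $x,y\in B(x_0,R/2)$, there is a graph geodesic $x=z_0,z_1,\dots,z_m=y$ (of length $d(x,y)$) lying entirely in $B(x_0,R/2)$. One builds it greedily, at each step advancing one coordinate toward $y$: either some such move strictly decreases the $\ell^1$-distance to $x_0$, or every coordinate yet to be matched lies strictly between $x_0$ and $y$, in which case the current point is strictly closer to $x_0$ than $y$ is and any move stays in $B(x_0,R/2)$. The payoff is that the concentric confining balls $\widehat B_i:=B(z_i,R/2)$ all satisfy $\widehat B_i\subseteq B(x_0,R)=B$, so that confinement to $\widehat B_i$ for the first $t$ steps entails $\tau_B>t$.

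The core is a per-leg confined lower bound. Fix a small $\eta\in(0,1)$ and, following the skeleton of Proposition~\ref{p:gb}(a) with $r\le R/8$, leg times $s_i\in\{s,s+1\}$ summing to $n$ and subject to $s_i\le\eta R^2/4$ and $(3r+1)^2\le L^2 s_i$. For $w\in B(z_i,r)$, using $\tau_{\widehat B_i}\le\tau_B$,
\[
\P^w\big(X_{s_i}\in B(z_{i+1},r),\,s_i<\tau_B\big)\ \ge\ \P^w\big(X_{s_i}\in B(z_{i+1},r)\big)-\P^w\big(X_{s_i}\in B(z_{i+1},r),\,\tau_{\widehat B_i}\le s_i\big).
\]
The first term is $\ge c\,r^d/s_i^{d/2}$ by summing the near-diagonal bound \eqref{eq:llclt} over the target ball and invoking the volume estimate \eqref{eq:va}, exactly as in Proposition~\ref{p:gb}(a). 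For the correction I apply the strong Markov property at $\tau_{\widehat B_i}$ together with the Gaussian \emph{upper} bound \eqref{eq:uhkb}: the exit point sits on $\partial\widehat B_i$ at distance $\gtrsim R$ from the target ball while only $s_i\le\eta R^2/4$ units of time remain, so each return probability carries a factor $\exp(-c/\eta)$. Thus the correction is at most $\exp(-c/\eta)$ times the main term, and choosing $\eta$ small leaves at least $\tfrac12\,c\,r^d/s_i^{d/2}$.

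Multiplying the $m$ per-leg bounds reproduces $p^B_n(x,y)+p^B_{n+1}(x,y)\ge c_2 n^{-d/2}\exp(-c_3 m)$ as in Proposition~\ref{p:gb}(a) (the per-leg factors of $\tfrac12$ are absorbed into $\exp(-c_3m)$). Choosing $m\asymp\max\{R_0^2/n,\,1/\eta,\,R_0/R,\,1\}$ with $R_0=d(x,y)$ makes the three leg constraints compatible and gives $m\le 9R_0^2/(L^2n)+C_0$, so $\exp(-c_3m)\gtrsim\exp(-C\,d(x,y)^2/n)$, which is \eqref{eq:kldb}. This chaining covers $n\ge\tfrac{2^6}{L^2}R_0$; in the complementary small-time range $R_0\le n\le\tfrac{2^6}{L^2}R_0$ the legs would force $s_i<1$, so there I instead push the walk straight along the in-ball geodesic and then idle by back-and-forth steps inside $B(x_0,R/2)$, yielding $p^B_n+p^B_{n+1}\ge(2d)^{-(n+1)}$, of the right order since $n\asymp R_0$. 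The main obstacle is the genuinely diffusive regime $n\asymp R^2$ with $d(x,y)\asymp R$, where killing costs an order-one factor: here $m\asymp 1/\eta$ is a constant and the confining radius is pinned to exactly $R/2$, and one must verify that $s_i\le\eta R^2/4$, $r\le R/8$ and $(3r+1)^2\le L^2 s_i$ can hold simultaneously---this is what dictates both the $R/2$ confining radius and the requirement that the geodesic live in $B(x_0,R/2)$. The bounded range $2\le R\le R_\ast$ is handled by finiteness, since, up to translation, only finitely many admissible tuples occur and each has $p^B_n+p^B_{n+1}>0$.
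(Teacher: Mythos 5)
Your architecture is viable and in fact close in spirit to the paper's, but there is a genuine gap at the recombination step. The sentence ``multiplying the $m$ per-leg bounds reproduces $p^B_n+p^B_{n+1}\ge c_2 n^{-d/2}\exp(-c_3m)$ as in Proposition~\ref{p:gb}(a)'' silently uses the \emph{two-sided} scale relation $3r+1\le L\sqrt{s}\le 16r$ from that proof (see \eqref{eq:crsl} \emph{and} \eqref{eq:crsu}): each leg factor $c\,r^d/s_i^{d/2}$ is bounded below by a constant only when $r\gtrsim\sqrt{s}$. You impose only the lower constraint $(3r+1)^2\le L^2 s_i$ together with the cap $s_i\le \eta R^2/4$, and with $r\approx d(x,y)/m$ these are incompatible with $r\gtrsim\sqrt{s}$ on part of the required range: once $n\gg \eta\, d(x,y)^2$, the cap pins $m\asymp 1/\eta$ (a constant $>1$), so $r/\sqrt{s}\asymp d(x,y)\sqrt{\eta/n}\to 0$, and the product of the leg factors is of order $c^m\bigl(\eta\,d(x,y)^2/n\bigr)^{d(m-1)/2}n^{-d/2}$, short of the target by a polynomial factor in $n$. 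Concretely, for adjacent $x\sim y$ and $n\asymp R^2$ your chain through balls of radius $r\approx 1/m$ yields roughly $n^{-dm/2}$ rather than $n^{-d/2}$. Taking $m=1$ instead is not an option, since then $s_1=n\asymp R^2$ violates the cap and the exit correction is comparable to the main term --- exactly the rate-mismatch problem you identified. Note this failure occurs in the opposite regime from the one you flagged as the main obstacle: for $d(x,y)\asymp R$, $n\asymp R^2$ your scales do match ($r/\sqrt{s}\asymp\sqrt{\eta}$, constant); the breakdown is at $d(x,y)\ll R$, $n\asymp R^2$.

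The paper handles precisely that regime by the subtraction you dismissed, applied at a single scale with no chaining: it writes $p^B_n(x,y)\ge p_n(x,y)-\max_{z\in\partial B}\max_{1\le m\le n}p_m(z,y)$ (see \eqref{eq:baB}), bounds the subtracted term using the Gaussian upper bound \eqref{eq:uhkb} at distance $\ge R/2$ together with the unimodality of $t\mapsto t^{-d/2}\exp(-cR^2/t)$ (so the maximum over $1\le m\le n$ sits at $m=n$), and obtains $c_3 n^{-d/2}\bigl(\exp(-c_4 d^2(x,y)/n)-\exp(-c_5R^2/n)\bigr)$. The mismatch of rates you worried about ($U_2$ versus $L_2$) is neutralized by restricting to $d(x,y)\le\delta R$ and $n\le\gamma R^2$ with $\delta,\gamma$ chosen against $c_4,c_5$ --- so the ``tempting shortcut'' does work in the localized regime, and your objection is valid only when $d(x,y)\gtrsim\delta R$. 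There the paper chains a \emph{bounded} number $m=\lfloor 2/\delta\rfloor$ of these localized killed estimates \eqref{eq:delta}, and since $u=d(x,y)\ge\delta R$ while $n\le R^2$, one has $r/\sqrt{s}\approx u/\sqrt{mn}\gtrsim\delta^{3/2}$ automatically, so recombination costs only a constant. Your proof can be repaired either by adopting that split, or within your own scheme by decoupling the ball radius from $d(x,y)/m$ (take $r\asymp L\sqrt{s}$ around centers spaced $d(x,y)/m$ apart along your in-ball geodesic, which keeps the per-leg near-diagonal bound \eqref{eq:llclt} and your confinement correction intact); as written, however, the choice $m\asymp\max\{R_0^2/n,1/\eta,\dots\}$ with $r\approx R_0/m$ does not yield \eqref{eq:kldb}. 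Your remaining ingredients --- the geodesic-within-$B(x_0,R/2)$ fact, the per-leg correction via the strong Markov property at $\tau_{\widehat B_i}$, and the small-$n$ and small-$R$ path arguments --- are sound and match the paper's treatment of those cases.
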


\begin{proof}
  Let $x,y\in B(x_0, \frac{R}{2})$. We shall first prove that $ \exists\, 0 < \delta < \gamma <1$ such that  if
  $$\mbox{$d(x,y)\leq \delta R$ and $d(x,y)\leq n\leq \gamma R^2$ then \eqref{eq:kldb} holds.}$$ Then we shall use a chaining argument similar to proof of \eqref{eq:lhkb} for the general case. First we have
  \begin{align*}
    p_n(x,y) &=\P^x(X_n=y)\\
    &=\P^x(X_n=y, \tau_B>n)+\P^x(X_n=y, \tau_B\leq n)\\
    &= p_n^B(x,y) + \P^x(X_n=y, \tau_B\leq n).
    \end{align*}
As, $$\P^x(X_n=y, \tau_B \leq n)=\E^x(I_{(\tau_B\leq n)}\P^{X_{\tau_B}}(X_{n-\tau_B}=y))\leq \max_{z\in \partial B} \max_{1\leq m \leq n} \P^z(X_m=y)$$ we have 
\begin{equation}\label{eq:baB}
p_n^B(x,y)\geq p_n(x,y)-\max_{z\in \partial B} \max_{1\leq m \leq n} p_m(z,y).
\end{equation}
We also have a similar lower bound for $p^B_{n+1}(x,y)$. Now, for $ z \in \partial B$ and $y \in B(x_0, \frac{R}{2})$ we have $d(z,y)\geq \frac{R}{2}.$
So, Proposition \ref{p:gb} (b) implies that  $\exists c_1 >0$ and $c_2 >0$ such that $$\max_{z\in \partial B} \max_{1\leq m \leq n} p_m(z,y)\leq  \max_{1\leq m \leq n} \frac{c_1}{m^{\frac{d}{2}}} \exp\left(-c_2\frac{R^2}{m}\right).$$

Consider the function $f: \R \rightarrow \R$ given by $f(m)=\frac{c_1}{m^{\frac{d}{2}}}\exp\left(-c_2 \frac{R^2}{m}\right)$. It is easy to see that $f(\cdot)$ is unimodal with maximum at $m=cR^2$, for some constant $c >0$. Therefore if $n<cR^2$, then over the range $1\leq m \leq n$, $f(m)$ is maximised at $m=n$. So for $n < c R^2$, we have $$\max_{z\in \partial B} \max_{1\leq m \leq n} p_m(z,y)\leq  \frac{c_1}{n^{\frac{d}{2}}} \exp\left(-c_2\frac{R^2}{n}\right).$$ Using this and \eqref{eq:lhkb} in \eqref{eq:baB}, for $ \max\{1, d(x,y)\} \leq n \leq cR^2$ we have that there exists $c_3,c_4, c_5>0$ such that
 $$ p^B_n(x,y)+p^B_{n+1}(x,y)\geq c_3\frac{1}{n^{\frac{d}{2}}}\left(\exp\left(-c_4\frac{(d(x,y))^2}{n} \right)-\exp\left(-c_5\frac{R^2}{n}\right)\right).$$
So we can choose $0 < \delta < \gamma < 1$ (independent of $R$) such that  if $d(x,y)\leq \delta R$ and $ \max\{1, d(x,y)\} \leq n \leq \gamma R^2$, we get
\begin{equation}\label{eq:delta}
  p^B_n(x,y)+p^B_{n+1}(x,y)\geq c_6n^{-\frac{d}{2}}\exp\left(- c_7\frac{(d(x,y))^2}{n} \right).
\end{equation}
We are now ready to prove the general case.  Suppose $R_0 = \lfloor\frac{4}{\delta^2} \rfloor +1$.  Then there is $c_8 >0$ such that
for all $2 \leq R \leq R_0$, $x,y \in B(x_0, \frac{R}{2})$ and $
\max\{1, d(x,y)\} \leq n \leq R^2$, such that
$$p^B_n(x,y)+p^B_{n+1}(x,y)\geq \left(\frac{1}{2d}\right)^{n+1}=\frac{1}{2d}\exp(-c_8 n).$$
The above is true because for any $n \geq 1$ we can find a path of length $n$ (perhaps overlapping) inside $B(x_0, R)$ connecting $x$ and $y$. The constant $c_8 >0 $ will only depend on $R_0$ and $d$.
Then it is immediate to see that   $\exists A_1 >0, C_1 >0$ such that for all $2\leq R \leq R_0$, $x_0 \in \Z^d$, and $B= B(x_0,R)$, $x,y \in B^\prime= B(x_0, \frac{R}{2})$ , $\max\{ 1, d(x,y)\} \leq n \leq R^2$
        \begin{equation} \label{eq:kudbr0}
          p^B_n(x,y) +p^B_{n+1}(x,y) \geq \frac{A_1}{n^{\frac{d}{2}}}\exp\left(-C_1\frac{d^2(x,y)}{n}\right)     \end{equation}
        with $A_1 >0, C_1 >0$ depending only on $R_0$.

We shall assume now that $R >R_0$. We will apply a chaining of balls argument. Let $x,y \in B(x_0, \frac{R}{2})$, and $\delta R\leq u=d(x,y) \leq R$. We will consider two cases.

{\bf Case 1 :} $ u \leq n \leq \frac{4}{\delta}u$. There is $c_1>0$ such that 
$$p^B_n(x,y)+p^B_{n+1}(x,y)\geq \left(\frac{1}{2d}\right)^{n+1}\geq \exp(-c_1 n).$$
As before, it is easy to see in  the current range of $n$ that  $\exists A_2 >0, C_2 >0$ such that
        \begin{equation} \label{eq:kudb}
          p^B_n(x,y) +p^B_{n+1}(x,y) \geq \frac{A_2}{n^{\frac{d}{2}}}\exp\left(-C_2\frac{d^2(x,y)}{n}\right)
        \end{equation}
with $A_2 >0$ and $C_2>0$ depending only on $d, \delta >0$.

{\bf Case 2 :} $\frac{4}{\delta}u \leq n \leq R^2$.

Let $ m = \lfloor \frac{2}{\delta} \rfloor.$ Set $r = \lfloor \frac{u}{m} \rfloor$, and  $s = \lfloor \frac{n}{m} \rfloor.$ Now it is clear that as $ \frac{1}{\delta} \leq m \leq \frac{2}{\delta}$ and $R > R_0$ we have
$$ r \geq \frac{u}{2m} \geq \frac{\delta^2 R}{4} > 1 \mbox{ and } s \geq \frac{n}{2m} \geq \frac{\delta n}{2} \geq 2 u > 1.$$
There exists  \begin{eqnarray*}
z_1, z_2, \ldots, z_{m}:&x=z_0, \quad y = z_m,\quad d(z_i,z_{i+1}) \in \{r, r+1\},  \\
\mbox{and}&& \nonumber\\
s_1, s_2, \ldots, s_m:& s_i \in \{s,s+1\},\mbox{ and } \sum_{i=1}^m s_i =n. 
\end{eqnarray*}
Let $B_i = B(z_i,r)$ for $ 1 \leq i \leq m-1$.
 By considering all possible paths from $x$ to $y$ and definition of $p^B_n$ we have that
\begin{eqnarray}
p^B_n(x,y) + p^B_{n+1}(x,y) &\geq&  \P^{B,x}(X_{s_1} \in B_1) \cdot  \prod_{i=1}^{m-2} \min_{w \in B_i}\P^{B,w} (X_{s_{i+1}} \in B_{i+1})  \cdot\min_{w \in B_{m-1}} \P^{B,w} (X_{s_{m}} =y)  \nonumber \\\label{eqa:kbstep1}
\end{eqnarray}
Note that :
\begin{equation}
\label{eq:rsb} \frac{1}{\delta} \leq m \leq \frac{2}{\delta}, \quad \frac{u}{2m} \leq r \leq \frac{u}{m} \mbox{ and } \frac{n}{2m} \leq s \leq \frac{n}{m} 
 \end{equation}
and so
\begin{align*}
  & r \leq \delta R,\qquad
3r+1 < 4r < 4 \frac{u}{m} < \frac{n \delta}{m} < \frac{n}{2m} < s, \qquad 
\mbox{ and }s < \frac{n}{m} < \delta R^2.
 \end{align*}
Using the above, we can apply \eqref{eq:delta} to all the terms in \eqref{eqa:kbstep1} to obtain
\begin{eqnarray}
  p^B_n(x,y) + p^B_{n+1}(x,y) &\geq&  c_1^m \frac{\exp(-c_2\frac{mr^2}{s})}{s^\frac{d}{2}} \left(\frac{r}{\sqrt{s}}\right)^{d(m-1)}.\nonumber
\end{eqnarray}
Using \eqref{eq:rsb}  in the above we have
\begin{eqnarray}
  p^B_n(x,y) + p^B_{n+1}(x,y)  &\geq & \frac{\exp(-c_2\frac{u^2}{n})}{n^{\frac{d}{2}}} c_1^m m^{\frac{d}{2}} \left(\frac{u}{2 \sqrt{mn}} \right)^{d(m-1)}. \nonumber
\end{eqnarray}
Using the fact that $u \geq \delta R$ and $n \leq R^2$ we have .
\begin{eqnarray}
  p^B_n(x,y) + p^B_{n+1}(x,y)  &\geq&  c_1 \frac{\exp(-c_2\frac{u^2}{n})}{n^{\frac{d}{2}}} c_3^m m^{d-\frac{dm}{2}}. \nonumber
\end{eqnarray}
As $m = \lfloor \frac{2}{\delta} \rfloor$ we have
\begin{eqnarray}
  p^B_n(x,y) + p^B_{n+1}(x,y) &\geq &c_4 \frac{\exp(-c_2\frac{u^2}{n})}{n^{\frac{d}{2}}}
  \label{eqa:kbstep2}
\end{eqnarray}
So the proof is complete.

\end{proof}

\begin{proof}[Proof of Proposition \ref{p:ugi}]
Let $x, y \in \Z^d$. Recall that $r = \max\{1, d(x,y)\}.$ We begin with the lower bounds. From \eqref{eq:greenf} we have that
$$g_B(x,y)=\sum_{n=0}^{\infty}{p_n^B(x,y)}\geq \sum_{n=r^2}^{ R^2}{p_n^B(x,y)}.$$
Using Lemma \ref{l:kdb}, we get that there exists $c_1, c_2>0$  such that
\begin{equation} \label{eq:int} g_B(x,y)\geq c_1\sum_{n=r^2}^{ R^2}\frac{1}{n^{\frac{d}{2}}} \geq c_2 \int_{r^2}^{ R^2}{s^{-\frac{d}{2}}} ds.\end{equation}
In the last inequality we have used the fact that sum is a upper bound for the Reimann integral. In the case $d=2$, evaluating the integral in \eqref{eq:int} we obtain $$g_B(x,y) \geq 2\log(R/r).$$
In the case $d>2$, evaluating the integral in   \eqref{eq:int}  we obtain 
\begin{align*}
  g_B(x,y) &\geq \int_{r^2}^{R^2}{s^{-\frac{d}{2}} ds} =\frac{2}{d-2}\left(\frac{1}{r^{d-2}}-\frac{1}{ R^{d-2}}\right)\\ &\geq  \frac{2}{d-2}\left(\frac{1}{r^{d-2}}-\frac{1}{(2r)^{d-2}}\right)\\ &= \left(\frac{2}{d-2}\right)\left(1-\frac{1}{2^{d-2}}\right)\frac{1}{r^{d-2}},
\end{align*}
where we have used the assumption $r\leq  \frac{R}{2}$ in the second last inequality. Thus we have completed the proof of the lower bound.

Next, we prove the upper bound for the case $d>2$.  As $p_n^B(x,y)\leq p_n(x,y)$, we will use using the Gaussian upper bound \eqref{eq:uhkb} in Proposition \ref{p:gb}(b) to obtain

\begin{align*}
  g_B(x,x) &= \sum_{n=0}^{\infty}p^B_n(x,x) \leq \sum_{n=0}^{\infty}p_n(x,x)
  \leq 1 + c_1\sum_{n=1}^{\infty}\frac{1}{n^{\frac{d}{2}}} \leq c_2,
\end{align*}

as required in the upper bound  and for $x\neq y$,  we get

\begin{align*}
g(x,y)&=\sum_{n=0}^{\infty}{p_n(x,y)}\leq \sum_{n=0}^{\infty}{\frac{c_1}{n^{\frac{d}{2}}}} \exp{\left(-c_2\frac{r^2}{n}\right)}\leq  c_3\int_0^{\infty}{\frac{c_1}{s^{\frac{d}{2}}} \exp\left(-c_2\frac{r^2}{s}\right)ds}.
  \end{align*}

  Changing variables in the final integral using $t=\frac{s}{r^2}$ we obtain $$\int_0^{\infty}{\frac{c_1}{s^{\frac{d}{2}}} \exp\left(-c_2\frac{r^2}{s}\right)ds}= \frac{c_1}{r^{d-2}}\int_0^{\infty}{\frac{\exp\left(-\frac{c_2}{t}\right)}{s^{\frac{d}{2}}}}ds=\frac{c_4}{r^{d-2}},$$ for some $c_4>0$.

Finally, we prove the upper bound for $d=2$. Splitting $g_n(x,y)$ suitably and using $p_n^B(x,y)\leq p_n(x,y)$ we have that there exists  $c_1,c_2,c_3,c_4 >0$ such that 
\begin{align*}
      g_B(x,y)&= p_0^B(x,y)+\sum_{n=1}^{r^2}{p_n^B(x,y)}+\sum_{n=r^2+1}^{R^2}{p_n^B(x,y)}+\sum_{n=R^2}^{\infty}{p_n^B(x,y)}\\
      & \leq 1+\sum_{n=1}^{r^2}{p_n(x,y)}+\sum_{n=r^2}^{R^2}{p_n(x,y)}+\sum_{n=R^2}^{\infty} \P^x(\tau_B >n)\\
                  & \leq 1+\sum_{n=1}^{r^2}{p_n(x,y)}+\sum_{n=r^2}^{R^2}{p_n(x,y)}+\sum_{n=R^2}^{\infty}{c_2\exp\left(-c_2\frac{R}{n}\right)}\\
         &\leq 1+\sum_{n=1}^{r^2}{p_n(x,y)}+\sum_{n=r^2+1}^{R^2}{\frac{c_3}{n}}+c_4,
      \end{align*}
where we have used  \eqref{eq:tfwp1} in third inequality and \eqref{eq:ulclt} in the final one.
For the third term we have the estimate $$\sum_{n=r^2+1}^{R^2}{\frac{c_1}{n}}\leq c_3\log(\frac{R}{r})$$ and for the second term using \eqref{eq:uhkb} and using the bound with Riemann integral we have that there exists, $c_5, c_6 >0$ such that
$$\sum_{n=1}^{r^2}{p_n(x,y)}\leq c_5\int_{0}^{r^2}{t^{-1}\exp\left(-B\frac{r^2}{t}\right)}dt=c_6\int_0^1{s^{-1}\exp\left(-\frac{B}{s}\right)}ds=c_7.$$ So we get that there exists $c_7 >0$ such that $$g_B(x,y)\leq c_7\log\left(\frac{R}{r}\right),$$ whenever $R \geq 2r$. This completes the proof for the upper bound in the case $d=2$. 
\end{proof}

\section{Balayage Formula} \label{sec:blyge}

In this section we shall prove Proposition \ref{p:blyge}.  We begin with another classical tool, a probabilistic representation for solutions of the Dirichlet problem in a domain.

\begin{lemma}\label{l:dir} Let $R \geq 1, x_0 \in \Z^d$, $D \subset B(x_0,R),$ and $\phi: \partial B(x_0,R) \rightarrow \R_+$. Then $h: \overline{B(x_0,R)} \rightarrow \R_+$ given by
  \begin{equation}
    h(x) = \E^x [\phi(X_{\tau_D})]
  \end{equation}
  is a unique solution to 
  \begin{eqnarray}
    \Delta h = 0 && \mbox{on }  D, \nonumber\\
    h = \phi &&\mbox{on }  \partial D. \label{eq:dirichlet}
  \end{eqnarray}
  \end{lemma}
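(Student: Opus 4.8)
The plan is to prove Lemma~\ref{l:dir}, the probabilistic solution of the Dirichlet problem, in two parts: existence/verification (that $h(x)=\E^x[\phi(X_{\tau_D})]$ is well-defined, non-negative, harmonic in $D$, and agrees with $\phi$ on $\partial D$) and uniqueness (that it is the only such solution). I first note that $h$ is well-defined and finite because $\tau_D < \infty$ almost surely. Indeed, $D \subset B(x_0,R)$, so $\tau_D \le \tau_{B(x_0,R)}$, and the latter is finite with probability one by the exit-time estimate in \eqref{eq:tfwp1}; moreover $\phi \ge 0$ is bounded on the finite set $\partial B(x_0,R)$. Non-negativity of $h$ is immediate from $\phi \ge 0$. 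On the boundary, if $x \in \partial D$ then $X_0 = x \in D^c$ under $\P^x$, so $\tau_D = 0$ and $X_{\tau_D}=x$, giving $h(x)=\phi(x)$.

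The heart of the existence argument is harmonicity in $D$, which I would establish by a one-step Markov (first-step) decomposition. Fix $x \in D$; then $\tau_D \ge 1$ under $\P^x$, so conditioning on the first step $X_1$ and using the Markov property,
\begin{equation*}
h(x) = \E^x[\phi(X_{\tau_D})] = \sum_{y \sim x} p_{xy}\, \E^y[\phi(X_{\tau_D})] = \frac{1}{2d}\sum_{y \sim x} h(y).
\end{equation*}
This rearranges to $\Delta h(x) = (P-I)h(x) = \frac{1}{2d}\sum_{y\sim x}(h(y)-h(x))=0$, i.e. $\Delta h = 0$ on $D$, matching the convention $\Delta=(P-I)$ from Section~\ref{sec:ehiproof}. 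I should be slightly careful that the first-step decomposition is valid for every neighbor $y$ of $x$, including $y \in D^c$, where $\E^y[\phi(X_{\tau_D})]=\phi(y)=h(y)$ still holds by the boundary case above, so the identity is uniform over all neighbors.

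For uniqueness, suppose $h_1,h_2$ both solve \eqref{eq:dirichlet} and set $g=h_1-h_2$, so $\Delta g = 0$ on $D$ and $g=0$ on $\partial D$. The clean probabilistic route is to show that $M_k := g(X_{k \wedge \tau_D})$ is a bounded martingale under $\P^x$: the harmonicity $Pg=g$ on $D$ gives the martingale property at times before $\tau_D$, and $g$ is bounded since it lives on the finite set $\overline{B(x_0,R)}$. Applying optional stopping at the almost-surely-finite time $\tau_D$ yields $g(x)=\E^x[M_0]=\E^x[g(X_{\tau_D})]=0$, because $X_{\tau_D}\in D^c$ where $g$ vanishes. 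Hence $h_1=h_2$ on $D$, and they already agree on $\partial D$. The main obstacle, and the one place to be careful, is justifying optional stopping: this needs integrability/uniform boundedness of the stopped martingale (immediate from finiteness of the state space) together with $\tau_D<\infty$ a.s. (from \eqref{eq:tfwp1}); alternatively one can bypass martingale machinery entirely with a maximum-principle argument, noting that a function harmonic on the finite set $D$ with zero boundary values attains its extrema on $\partial D$ and is therefore identically zero.
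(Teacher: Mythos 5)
Your proposal is correct, and the existence half coincides with the paper's: both verify finiteness of $\tau_D$ via \eqref{eq:tfwp1}, handle the boundary case $\P^y(\tau_D=0)=1$, and obtain harmonicity from the one-step Markov decomposition $h=Ph$ on $D$. The uniqueness half is where you genuinely diverge. The paper sets $h=h_1-h_2$, extends it by zero outside $\overline{B(x_0,R)}$, and runs a discrete energy (summation-by-parts) identity,
\begin{equation*}
\sum_{x}\sum_{y}p_1(x,y)\bigl(h(x)-h(y)\bigr)^2 = -2\sum_{x}h(x)\,\Delta h(x)=0,
\end{equation*}
forcing $h$ to be constant and hence zero; this is purely algebraic, needs no stopping times, but requires the extension-by-zero device and an interchange of sums (justified by finite support). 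You instead stop the bounded martingale $g(X_{k\wedge\tau_D})$ at the a.s.\ finite time $\tau_D$ and invoke optional stopping, which is shorter and reuses the probabilistic representation already in play, at the cost of importing martingale machinery that the paper otherwise avoids in this elementary, self-contained note. Your fallback via the maximum principle is exactly the alternative the paper records in the remark following its proof. All three routes are sound; just make sure, as you do, that the justification for optional stopping (uniform boundedness of the stopped process on the finite set $\overline{D}$ plus $\tau_D<\infty$ a.s.) is stated explicitly.
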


\begin{proof}[Proof of Lemma \ref{l:dir}]
By \eqref{eq:tfwp1}, we know that $\tau_{B(x_0,R)} < \infty$ with probability 1 and hence $\tau_D < \infty$  with probability 1. Then using the Markov property and the one-step  probability  we  have
\begin{align*}
h(x) &=  \E^x [\phi(X_{\tau_D})]\\
&= \sum_{y \in \Z^d} \E^x [\phi(X_{\tau_D}); X_1=y]\\
&= \sum_{y \in \Z^d} p_1(x,y)\E^{y} [\phi(X_{\tau_D})]\\
&= Ph(x)
\end{align*}
This along with the fact that $\P^y[\tau_D=0] = 1$ for all $y \in \partial D$ implies that $h$ solves \eqref{eq:dirichlet}. 

We will now show uniqueness. Suppose there are two solutions $h_1$ and $h_2$ to \eqref{eq:dirichlet}. Then $h = h_1 -h_2$ solves \eqref{eq:dirichlet} with $\phi \equiv 0$. Extend $h$ to be $0$ on all of $\overline{B(x_0,R)}^c.$ Then
\begin{align*}
\sum_{x \in \Z^d} \sum_{y \in \Z^d}p_1(x,y)(h(x) - h(y))^2  
&=\sum_{x \in \Z^d}\sum_{y \in \Z^d}p_1(x,y)(h(x) - h(y))h(x) - \sum_{x \in \Z^d}\sum_{y \in \Z^d}p_1(x,y)(h(x) - h(y))h(y)
\end{align*}
Since the terms in the summand are non-zero only for finitely many terms, we can interchange the sums and we have
\begin{align*}
\sum_{x \in \Z^d} \sum_{y \in \Z^d}p_1(x,y)(h(x) - h(y))^2  &= 2\sum_{x \in \Z^d}\sum_{y \in \Z^d}p_1(x,y)(h(x) - h(y))h(x)\\
&= -2\sum_{x \in \Z^d} h(x) \left(\sum_{y \in \Z^d}p_1(x,y)(h(y) -h(x))\right)\\
& = -2\sum_{x \in \Z^d} h(x) (Ph(x) -h(x))\\
& = -2\sum_{x \in \Z^d} h(x) \Delta h(x) \\
&=0.
\end{align*}
Thus $h \equiv 0$ and $h_1=h_2$.
 \end{proof}
 
\begin{remark}
  An alternative argument for uniqueness proof is the following. Let $h$ be as in the proof. The maximum principle for harmonic functions implies that the maximum and the minimum of $h$ is attained on the boundary, and hence $h$ must be zero. 
 \end{remark}

We are now ready to prove Proposition \ref{p:blyge}
\begin{proof}[Proof of Proposition \ref{p:blyge}]
  Define $h_A: \bar{B} \rightarrow \R_+$ by
  $$ h_A(x) = \E^x[ h(X_{T_A}); T_A < \tau_B].$$
  From the definition of $\tau_B$ and $T_A$ it is immediate that 
  \begin{equation}
    h_A(x) = \E^x[ 1_A(X_{\tau_{B \setminus A}})h(X_{\tau_{B \setminus A}})] = \E^x[\phi(X_{\tau_{B \setminus A}})],
  \end{equation}
  where  $\phi : \overline{B} \rightarrow \R_+$ is given by $\phi(x) = 1_A(x)h(x)$. By the definition of $h_A$ we have  $h_A \equiv h$ on $A$. This and Lemma \ref{l:dir} will imply that
  \begin{equation}\label{eq:hha}
    \Delta h_A = 0 \mbox{ on } A^\circ \cup B\setminus A
  \end{equation}
  For $x \in \partial_i A$, $h_A(x) = h(x) = Ph(x) \geq Ph_A(x)$. Thus we have 
  \begin{equation}\label{eq:hha1}
    \Delta h_A \leq 0 \mbox{ on } \partial_i A.    
  \end{equation}
Define $f: B \rightarrow \R_+$ by
$$ f = (I_B- P_1^B)h_A,$$
where $I_Bg(x) = 1_B(x)g(x)$ and $P_1^Bg(x) = \sum_{y \in B}p_1^B(x,y)g(x)$ for $g: \Z^d \rightarrow \R$. 
From definition of $h_A$ we have that $h_A \equiv 0$ on $\partial B$, so $$ \Delta h_A = (P-I)h_A = (P-I)(I_Bh_A) = P(I_Bh_A) -I_Bh_A = (P_1^B-I_B)h_A = - f.$$
Using \eqref{eq:hha} and \eqref{eq:hha1}, we have that $f \geq 0$ and $\mbox{support} (f) \subset \partial_i(A)$. Define for $k \geq 1$, $$P_k^Bg(x) = \sum_{y \in B}p_k^B(x,y)g(x)$$ for $g: \Z^d \rightarrow \R$. Now for $n \geq 1$,  by the definition of $f$ and $h_A$ we have,
\begin{eqnarray*}
  \left| \sum_{k=0}^n P^B_kf(x) -h_A(x) \right| &=& \left| \sum_{k=0}^n P^B_{k} (P^B-I)h_A(x)  -h_A(x) \right|   \\
  &=&\left|  \left[\sum_{k=0}^n \left(P^B_{k+1}h_A(x) -P^B_kh_A(x) \right) \right] -h_A(x) \right|    \\
  &=& P^B_{n+1}h_A(x)  \\
  &=& \sum_{y \in B}p_k^B(x,y)h_A(x)\\
  &\leq&  c_1 \P^x(\tau_B > n).
  \end{eqnarray*} 
Using \eqref{eq:tfwp1}, we have that for all $x \in B$,
  $$ \sum_{k=0}^\infty P_k^Bf(x)  = h_A(x).$$
  To conclude we have, 
  $$ \sum_{y \in \partial_i A} g_B(x,y) f(y) = \sum_{y \in B} g_B(x,y) f(y) = \sum_{n=0}^\infty P_n^Bf(x)  = h_A(x) = h(x).$$
\end{proof}

\end{document}